\newcommand{\etype}[1]{\renewcommand{\labelenumi}{(#1{enumi})}}
\def\eroman{\etype{\roman}}
\newcommand{\Z}{\mathbb{Z}}
\newcommand{\Image}{{\operatorname{Im}\ }}
\newcommand{\Char}{{\operatorname{Char}\ }}
\newcommand{\Cent}{{\operatorname{Cent}}}
\newcommand{\tr}{{\operatorname{tr}}}
\newcommand{\diag}{{\operatorname{Diag}}}
\newcommand{\GL}{{\operatorname{GL}}}
\newcommand{\N}{\mathbb{N}}
\theoremstyle{definition}
\newtheorem{definition}{Definition}
\newtheorem{rem}{Remark}
\newtheorem*{rem*}{Remark}
\newtheorem*{acknow*}{Acknowledgements}
\newtheorem*{examples*}{Examples}
\newtheorem{examples}{Example}
\theoremstyle{plain}
\newtheorem{lemma}{Lemma}
\newtheorem{prop}{Proposition}
\newtheorem{theorem}{Theorem}
\newtheorem*{theorem*}{Theorem}
\newtheorem{conjecture}{Conjecture}
\newtheorem{question}{Question}
\newenvironment{proof-sketch}{\noindent{\bf Sketch of Proof}\hspace*{1em}}{\qed\bigskip}
\newenvironment{proof-idea}{\noindent{\bf Proof Idea}\hspace*{1em}}{\qed\bigskip}
\newenvironment{proof-of-lemma}[1]{\noindent{\bf Proof of Lemma #1}\hspace*{1em}}{\qed\bigskip}
\newenvironment{proof-of-prop}[1]{\noindent{\bf Proof of Proposition #1}\hspace*{1em}}{\qed\bigskip}
\newenvironment{proof-of-thm}[1]{\noindent{\bf Proof of Theorem #1}\hspace*{1em}}{\qed\bigskip}
\newenvironment{proof-attempt}{\noindent{\bf Proof Attempt}\hspace*{1em}}{\qed\bigskip}
\begin{document}

\title[Images of non-commutative polynomials ]{The images of non-commutative polynomials evaluated on $2\times 2$ matrices.}
\author{Alexey Kanel-Belov, Sergey Malev, Louis Rowen}

\address{Department of mathematics, Bar Ilan University,
Ramat Gan, Israel} \email{beloval@math.biu.ac.il}
\email {malevs@math.biu.ac.il}
\email{rowen@math.biu.ac.il}
\thanks{This work was financially supported by the Israel Science
Foundation (grant no. 1178/06).
\newline The second named author was supported by an Israeli Ministry of Immigrant Absorbtion scholarship.
\newline We are grateful to
 Kulyamin, Latyshev, Mihalev,  E.~Plotkin, and L.~Small for
 useful comments. Latyshev and Mihalev indicated that the
 problem was originally posed by Kaplansky.}

\begin{abstract}
Let $p$ be a multilinear polynomial in several non-commuting
variables with coefficients in a quadratically closed field $K$ of
any characteristic. It has been conjectured that for any $n$, the
image of $p$ evaluated on the set $M_n(K)$ of $n$ by $n$ matrices
is either zero, or the set of scalar matrices, or the set
$sl_n(K)$ of matrices of trace 0, or all of $M_n(K)$. We prove the
conjecture for $n=2$, and show that although the analogous
assertion fails for completely homogeneous polynomials, one can
salvage the conjecture in this case by including the set of all
non-nilpotent matrices of trace zero and also permitting dense
subsets of $M_n(K)$.
\end{abstract}


\maketitle

\section{Introduction}
Images of polynomials evaluated on algebras  play  an important
role in non-commutative algebra. In particular, various important
problems related to the theory of polynomial identities have been
settled after the construction of central polynomials by Formanek
\cite{F1} and Razmyslov \cite{Ra1}.

 The
parallel topic in group theory (the images of words in
 groups) also has been studied extensively, particularly in recent
 years.
Investigation of the image sets of words in pro-$p$-groups is
related to the investigation of Lie polynomials and  helped
Zelmanov  \cite{Ze} to prove that the free pro-$p$-group cannot be
embedded in the algebra of $n\times n$ matrices when $p\gg n.$
(For $p>2$, the impossibility of  embedding the free pro-$p$-group
into the algebra of $2\times 2$ matrices had been proved by Zubkov
\cite{Zu}.) The general problem of nonlinearity of the free
pro-$p$-group is related on the one hand with images of Lie
polynomials and words in groups, and on the other hand with
problems of Specht type, which is of significant current interest.

Borel~\cite{B} (also cf.~\cite{La}) proved that for any simple
(semisimple) algebraic group~$G$ and any word $w$ of the free
group on $r$ variables, the word map $ w: G^r\to G$ is dominant.
Larsen and Shalev \cite{LaS} showed that any element of a simple
group can be written as a product of length two in the word map,
and Shalev \cite{S} proved Ore's conjecture, that the image of the
commutator word in a simple group is all of the group.

In this note we consider the question, reputedly raised by
Kaplansky, of the possible image set $\Image p$ of a polynomial
$p$ on matrices. When $p = x_1x_2-x_2x_1$, this is a theorem of
Albert and Muckenhoupt \cite{AM}. For an arbitrary polynomial, the
question was settled for the case when $K$ is a finite field
 by Chuang \cite{Ch}, who proved that a subset $S \subseteq
M_n(K)$ containing $0$ is the image of a polynomial with constant
term zero, if and only if $S$ is invariant under conjugation.
Later Chuang's result was generalized by Kulyamin \cite{Ku1},
\cite{Ku2} for graded algebras.

  Chuang \cite{Ch}  also observed that for an infinite field $K$,
if $\Image p$ consists only of nilpotent matrices, then $p$ is a
polynomial identity (PI). This can be seen via Amitsur's Theorem
\cite[Theorem 3.26, p.~176]{Row} that says that the relatively
free algebra of generic matrices  is a domain. Indeed, $p^n$ must
be a PI for $M_n(K)$, implying $p$ is a PI.

Lee and Zhou proved  \cite[Theorem 2.4]{LeZh} that when $K$ is an
infinite division ring, for any non-identity $p$ with coefficients
in the center of $K$, $\Image p$ contains an invertible matrix.

Over an infinite field, it is not difficult to ascertain the
linear span of the values of any polynomial. Indeed, standard
multilinearization techniques enable one to reduce to the case
where the polynomial $p$ is multilinear, in which case the linear
span
 of its values comprise a Lie ideal since, as is well-known,
 $$[a, p(a_1, \dots, a_n)]
 = p([a,a_1],a_2 \dots, a_n)+ p(a_1,[a,a_2] \dots, a_n)+ \cdots +p(a_1, \dots,[a,
 a_n]),$$ and Herstein \cite{Her} characterized Lie ideals of a
 simple ring $R$ as either being contained in the center or
 containing the commutator Lie ideal $[R,R].$ Another proof is
 given in \cite{BK}; also see Lemma~\ref{linear} below. It
is considerably more difficult to determine the actual image set
 $\Image p$, rather than its linear span.

 Thus, in \cite{Dn}, Lvov
formulated  Kaplansky's question as follows:

\begin{question}(I. Lvov)\label{lvov}
Let $p$ be a multilinear polynomial over a field $K$. Is the set
of values of $p$ on the matrix algebra $M_n(K)$ a vector space?
\end{question}

In view of the above discussion,   Question \ref{lvov} is
equivalent to the following:

\begin{conjecture}\label{Polynomial image}
If $p$ is a multilinear polynomial evaluated on the matrix ring
$M_n(K)$, then $\Image p$ is either $\{0\}$, $K$,  $sl_n(K)$, or
$M_n(K)$. Here $K$ is the set of scalar matrices and $sl_n(K)$ is
the set of matrices of trace zero.
\end{conjecture}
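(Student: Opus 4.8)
The plan is to determine the linear span of $\Image p$ and then upgrade facts about the span into facts about the image itself, using two features peculiar to multilinear polynomials: the image is invariant under conjugation, since $g\,p(a_1,\dots,a_n)g^{-1}=p(ga_1g^{-1},\dots,ga_ng^{-1})$, and it is a cone, since multilinearity gives $p(\lambda a_1,a_2,\dots,a_n)=\lambda\,p(a_1,\dots,a_n)$ for every $\lambda\in K$. By Lemma~\ref{linear} together with Herstein's classification of the Lie ideals of a simple ring, the linear span is one of $\{0\}$, the scalars $K$, $sl_n(K)$, or $M_n(K)$; it then remains only to show that $\Image p$ realizes the whole span in each case. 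Note first that $0\in\Image p$ always, since $p(0,a_2,\dots,a_n)=0$.

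The two extreme spans are disposed of immediately. If the span is $\{0\}$ then $p$ is a PI and $\Image p=\{0\}$. If the span is $K$, every value is scalar, and $p$ is not a PI, so some nonzero scalar $c$ lies in $\Image p$; the cone property then yields $\lambda c$ for every $\lambda\in K$, whence $\Image p=K$.

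The real content lies in the cases where the span is $sl_n(K)$ or $M_n(K)$: here $\Image p$ is a conjugation-invariant cone, and I must show it omits no conjugacy class. This is not automatic---the nilpotent matrices form a conjugation-invariant cone that is far from everything---so I would study the characteristic-polynomial map $\chi\colon(a_1,\dots,a_n)\mapsto\chi_{p(a_1,\dots,a_n)}$, equivalently the morphism recording the traces $\tr(p^k)$ for $1\le k\le n$, from the irreducible variety $M_n(K)^n$ into the space of monic degree-$n$ polynomials (cut out by $\tr p=0$ in the $sl_n$ case). Proving $\chi$ dominant would place every regular semisimple class in the closure of the image; since a regular semisimple class is pinned down by $\chi$ once its characteristic polynomial splits over $K$, one would thereby recover all regular semisimple values. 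The cone and conjugation symmetries must finally be leveraged to reach the degenerate strata---matrices with repeated eigenvalues, non-semisimple matrices, and the nilpotents---which, because $\Image p$ need not be Zariski closed, have to be hit by explicit evaluations rather than by limits.

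The main obstacle, and the reason this paper restricts to $n=2$, is exactly this surjectivity onto every conjugacy class for general $n$: establishing dominance of $\chi$ and then filling in the boundary strata demands explicit control of the achievable eigenvalue configurations, which grows intractable as $n$ increases. For $n=2$ the control is available, and the hypothesis that $K$ is quadratically closed is tailored to it: a $2\times2$ matrix is classified by its trace and determinant, so $\chi$ targets a two-dimensional space and quadratic closure guarantees that both eigenvalues lie in $K$. One shows the image of $\chi$ is a dense cone, treats the discriminant-zero locus (scalar, nilpotent, and non-diagonalizable matrices) by direct construction, and thereby recovers all of $sl_2(K)$ or $M_2(K)$. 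Carrying this eigenvalue analysis past $n=2$ is where the difficulty resides.
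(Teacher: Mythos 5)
Your reduction is sound as far as it goes: multilinearity makes $\Image p$ a conjugation-invariant cone containing $0$, Lemma~\ref{linear} pins the linear span to $\{0\}$, $K$, $sl_n(K)$ or $M_n(K)$, and the two extreme spans are handled exactly as you do. But note that the statement you were given is the paper's Conjecture~\ref{Polynomial image}, which is open for general $n$; what you offer for the two hard spans is a program, not a proof. Dominance of your characteristic-polynomial map $\chi$ is stated as a goal rather than established, and even if granted it only places semisimple classes in the Zariski \emph{closure} of $\Image p$, whereas the image need not be closed. The paper itself proves the statement only for $n=2$ (Theorem~\ref{main}), so for $n\ge 3$ your plan leaves the same gap the paper does; the problem is that you present the $n=2$ case as finishable by your outline, and it is not.

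The concrete failure is in your $n=2$ endgame. ``Dense cone plus boundary strata hit by explicit evaluations'' cannot conclude, because there is a conjugation-invariant dense cone containing all scalars, all nilpotents, and all matrices with distinct eigenvalues that is nevertheless not $M_2(K)$: namely $M_2(K)\setminus\tilde K$, where $\tilde K$ is the set of non-nilpotent, non-diagonalizable matrices. This set passes every test your argument imposes, and it is even realized as the image of a completely homogeneous polynomial (Example~\ref{coneex2}(iv)), so a strategy that does not exploit multilinearity beyond cone-and-conjugation invariance cannot rule it out. This is precisely where the paper's real work lies: Lemma~\ref{thm1} shows the image is one of your four sets or $M_2(K)\setminus\tilde K$, and Lemmas~\ref{char2} and \ref{char_n_2} exclude the fifth possibility by a non-constructive invariant-theoretic argument --- assuming $\Image p=M_2(K)\setminus\tilde K$, one shows (for $\Char K\neq 2$) that the discriminant $\frac{1}{4}\tr^2 p-\det p$ is the square of a polynomial, hence that the eigenvalues $\lambda_1,\lambda_2$ of the generic value of $p$ are polynomial invariants, hence trace polynomials by Donkin's theorem, whence $(p-\lambda_1 I)(p-\lambda_2 I)=0$ exhibits zero divisors in the algebra of generic matrices with traces, contradicting Proposition~\ref{Am1} (in characteristic $2$ one argues via the auxiliary polynomials $p_i=p\,\tr(b_i)+\tr(p)\,b_i$). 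Without some such input (Amitsur's domain theorem plus Donkin/Procesi--Razmyslov), your case analysis cannot be closed even for $n=2$. Note also that where you invoke density to reach matrices with prescribed distinct eigenvalues, the paper gets exact surjectivity onto each eigenvalue ratio: restricting $p$ along lines $\tau_i x_i+t_i y_i$ makes $(\tr f)^2/\det f$ a ratio of quadratics in each pair $T_i$, and quadratic closure solves $(\tr f)^2/\det f=c$ exactly (Lemmas~\ref{2two} and \ref{2general_one}), which is what lets the paper avoid the closure problem your sketch runs into.
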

\begin{examples}\label{ex_multilin}
$\Image p$ can indeed   equal   $\{0\}$,  $K$,  $sl_n(K)$, or
$M_n(K)$. For example, if our polynomial is in one variable and
$p(x)=x$ then $\Image p=M_n(K)$. The image of the polynomial
$[x_1,x_2]$ is $sl_n(K)$. If the polynomial $p$ is central, then
its image is $K$ and examples of such polynomials can be found in
 \cite{Ra1} and in \cite{F1}. Finally if the
polynomial $p$  is a PI, then its image is $\{0\}$, and $s_{2n}$
is an example of such polynomial.
\end{examples}

As noted above, the conjecture fails for non-multilinear
polynomials when $K$ is a finite field.  The situation is
considerably subtler for images of non-multilinear, completely
homogeneous polynomials than for multilinear polynomials. Over any
field $K$, applying the structure theory of division rings to
Amitsur's theorem, it is not difficult to get an example of a
completely homogeneous polynomial $f$, noncentral on $M_3(K)$,
whose values all have third powers central; clearly its image does
not comprise a subspace of $M_3(K)$. Furthermore, in the
(non-multilinear) completely homogeneous case, the set of values
could  be dense without including all matrices. (Analogously,
although the finite basis problem for multilinear identities is
not yet settled in nonzero characteristic,  there are
counterexamples    for completely homogeneous polynomials,
cf.~\cite{Belov01}.)

Our main results in this note are for $n=2$, for which we settle
Conjecture \ref{Polynomial image}, proving the following results
(see \S\ref{def1} for terminology). We call a field $K$ {\it
quadratically closed} if every nonconstant polynomial of degree
$\le  2\deg p$ in $K[x]$ has a root in $K$.

\begin{theorem}\label{imhom}
Let $p(x_1,\dots,x_m)$ be a semi-homogeneous polynomial (defined
below) evaluated on the algebra $M_2(K)$ of $2\times 2$ matrices
over a quadratically closed field.
 Then $\Image p$ is either
 $\{0\}$, $K$, the set of all non-nilpotent matrices having
trace zero,
 $sl_2(K)$,  or  a
dense subset of $M_2(K)$ (with respect to Zariski topology).
\end{theorem}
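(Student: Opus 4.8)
The strategy is to reduce the semi-homogeneous case to the multilinear conjecture by exploiting the special structure of $2\times 2$ matrices, where trace and determinant completely control conjugacy classes and the characteristic polynomial. Let me sketch the approach.

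First, I'd set up the basic framework. A semi-homogeneous polynomial $p$ of degree $d$ scales under $x_i \mapsto \lambda_i x_i$ as $p \mapsto \lambda^{\mathbf{w}} p$ for appropriate weights; in particular, scaling all variables by a single $\lambda$ multiplies the value by $\lambda^d$. The key point for $2\times 2$ matrices is that the image $\Image p$ is invariant under conjugation (since $p$ is a polynomial expression), so it is a union of conjugacy classes, each determined by its trace and determinant. The plan is thus to understand the possible pairs $(\tr, \det)$ of values.

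So let me talk about the main steps.

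First I would analyze the trace of values. The trace $\tr\, p(a_1,\dots,a_m)$ is a semi-homogeneous polynomial function on the $a_i$. I would split into cases according to whether $\tr\, p$ is identically zero or not. If $\tr\, p \equiv 0$, then all values lie in $sl_2(K)$, and I'd argue the image is $\{0\}$, the non-nilpotent trace-zero matrices, or all of $sl_2(K)$. If $\tr\, p$ is not identically zero, I expect the image to be dense (or all of $M_2(K)$). The workhorse here is the quadratic closure hypothesis: because $K$ is quadratically closed relative to $\deg p$, any prescribed characteristic polynomial $\lambda^2 - s\lambda + n$ that is achieved "generically" can actually be realized by a genuine matrix value, since solving for eigenvalues only requires roots of a degree-$2$ polynomial whose coefficients are polynomial in the entries.

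The heart of the argument—and the main obstacle—is handling the borderline trace-zero case to distinguish $sl_2(K)$ from "non-nilpotent trace-zero matrices," which is precisely the new phenomenon the theorem introduces. A trace-zero $2\times 2$ matrix is nilpotent iff its determinant is zero. Since $\det\, p$ is itself a semi-homogeneous function, I would study its zero locus: the question becomes whether $p$ can take a nonzero value whose determinant vanishes, i.e. a nonzero nilpotent. The semi-homogeneity prevents the usual "rescale to reach any target" trick from working freely, because the determinant and trace scale with different weights, so one cannot independently adjust them. The crux is to show that if $p$ attains some non-nilpotent trace-zero value and also some value with nonzero trace is blocked by homogeneity, the set of attainable determinants on the trace-zero locus is either a single point $\{0\}$, all of $K$, or all of $K\setminus\{0\}$—the last giving exactly the non-nilpotent trace-zero matrices. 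I expect this to hinge on a careful continuity/irreducibility argument: the values with trace zero form a constructible set, and whether $0$ is an attainable determinant there is governed by whether the nilpotent value is a genuine limit or an isolated exception forced by the weights.

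Finally I would assemble the cases. Once the trace-zero analysis is complete, the nonzero-trace case should follow more directly by a dominance argument (in the spirit of Borel's theorem cited in the introduction): the map sending the tuple $(a_i)$ to $(\tr\, p, \det\, p)$ is a morphism of varieties, and showing its image is dense reduces to exhibiting enough values via explicit substitutions, after which quadratic closure fills in the actual matrices. I would lean on Lemma~\ref{linear} (the linear-span computation) to pin down the central versus $sl_2$ dichotomy when the image happens to be a subspace, recovering the clean $\{0\}$, $K$, $sl_2(K)$, $M_2(K)$ alternatives of Conjecture~\ref{Polynomial image} as the non-dense special cases.
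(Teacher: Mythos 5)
Your overall architecture (split on whether $\tr p$ vanishes identically, treat the trace-zero locus separately, use quadratic closure to realize characteristic polynomials) diverges from the paper's at exactly the point where the main difficulty lives. The paper organizes the proof around the conjugation invariant $\frac{\lambda_1}{\lambda_2}+\frac{\lambda_2}{\lambda_1}=-2+\frac{(\tr p)^2}{\det p}$ of Remark~\ref{Phi}, not around the trace. The dichotomy is: either two values of $p$ have different ratios of eigenvalues, or the ratio is constant on all of $\Image p$. In the first case one interpolates, setting $f(t)=p(tx_1+(1-t)y_1,\dots,tx_m+(1-t)y_m)$, so that $(\tr f)^2/\det f$ is a nonconstant rational function of $t$ of degree at most $2\deg p$; quadratic closure (in the paper's extended sense) makes its image cofinite in $K$, and Lemma~\ref{cone_conj} (the image of a semi-homogeneous polynomial is a union of irreducible invariant cones) upgrades a dense set of attained invariants to a Zariski-dense image. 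In the second case the constant ratio $r$ must be \emph{ruled out} when $r\neq\pm1$: there the eigenvalues become linear functions of $\tr p$, hence elements of the algebra of generic matrices with traces, and $(p-\lambda_1I)(p-\lambda_2I)=0$ exhibits zero divisors, contradicting Proposition~\ref{Am1}. Your proposal has no counterpart to this step, and your trace dichotomy cannot see it: a hypothetical $p$ all of whose values satisfy $(\tr p)^2=c\det p$ for a fixed $c\neq 0,4$ has nonzero trace generically, yet its image lies in a proper hypersurface and is not dense. Asserting density ``in the spirit of Borel'' or by ``dominance'' is assuming the conclusion; the domain property of the generic matrix algebra with traces is the actual engine here, and it is absent from your plan.

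A second, smaller problem: you describe semi-homogeneity as an obstruction (``prevents the usual rescale-to-reach-any-target trick''), when in fact it is the enabler. Lemma~\ref{cone_conj} gives $cA=p(c^{w_1/d}x_1,\dots,c^{w_m/d}x_m)$, so $\Image p$ is a cone, and together with conjugation invariance it is a union of the irreducible invariant cones of Example~\ref{coneex1}. That is what reduces the trace-zero case to a finite check: $sl_2(K)\setminus\{0\}$ is the union of exactly two irreducible invariant cones (the nonzero nilpotents and $\hat K$), Lemma~\ref{nilp} excludes an image consisting only of nilpotents, and one is left with $\hat K$ or all of $sl_2(K)$. No continuity or constructibility argument about whether ``the nilpotent value is a genuine limit'' is needed, nor would one suffice on its own without the cone structure.
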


(We also give examples to show how $p$ can have these images.)

\begin{theorem}
\label{main} If $p$ is a multilinear polynomial evaluated on the
matrix ring $M_2(K)$ (where $K$ is a quadratically closed field),
then $\Image p$ is either $\{0\}$,   $K$,  $sl_2$, or~$M_2(K)$.
\end{theorem}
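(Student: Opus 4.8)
The plan is to deduce Theorem~\ref{main} from Theorem~\ref{imhom}. A multilinear polynomial is in particular semi-homogeneous (it is homogeneous of multidegree $(1,\dots,1)$), so Theorem~\ref{imhom} already tells us that $\Image p$ is one of $\{0\}$, $K$, the set of non-nilpotent trace-zero matrices, $sl_2(K)$, or a dense subset of $M_2(K)$. Thus it suffices to do two things: eliminate the ``non-nilpotent trace-zero'' possibility, and prove that in the remaining ``dense'' case $\Image p$ is in fact all of $M_2(K)$. Along the way I will use two elementary consequences of multilinearity: scaling one argument shows $\Image p$ is a cone, $p(\lambda a_1,a_2,\dots,a_m)=\lambda\,p(a_1,\dots,a_m)$, and conjugating all arguments shows $\Image p$ is conjugation-invariant, $g\,p(a_1,\dots,a_m)\,g^{-1}=p(ga_1g^{-1},\dots,ga_mg^{-1})$.

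The first elimination is immediate. Substituting $a_1=0$ and using linearity in the first variable gives $p(0,a_2,\dots,a_m)=0$, so $0\in\Image p$ always. Since $0$ is nilpotent, the set of non-nilpotent trace-zero matrices does not contain $0$, and therefore cannot equal $\Image p$. This disposes of that case for every multilinear $p$, in any characteristic.

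The heart of the matter is to upgrade density to surjectivity, and here I would exploit a structural feature unavailable for general semi-homogeneous polynomials: for fixed $a_2,\dots,a_m$ the map $a_1\mapsto p(a_1,a_2,\dots,a_m)$ is $K$-linear, so its image $V_{a_2,\dots,a_m}$ is a linear subspace of $M_2(K)$, and $\Image p=\bigcup V_{a_2,\dots,a_m}$ is a union of linear subspaces. If some specialization of $a_2,\dots,a_m$ makes this linear map onto---equivalently, if the determinant of this $4\times4$ linear operator, viewed as a polynomial in the entries of $a_2,\dots,a_m$, is not identically zero---then $V_{a_2,\dots,a_m}=M_2(K)$ and we are done at once. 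So I would try to show that, in the dense case (where $\tr p\not\equiv 0$, since otherwise $\Image p\subseteq sl_2(K)$ is a proper, hence Zariski-closed, subspace and cannot be dense), this determinant does not vanish identically; the subspaces $V_{a_2,\dots,a_m}$ generically surject onto $K$ under the trace, and the task is to promote this to full rank $4$.

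The step I expect to be the main obstacle is realizing the \emph{exceptional} conjugacy classes, which is precisely where the semi-homogeneous analogue fails (e.g.\ the image of $x^2$ is dense but misses the nonzero nilpotents). Since $K$ is quadratically closed, every target characteristic polynomial $x^2-\tau x+\delta$ splits, and since $\Image p$ is a conjugation-invariant cone the class of a non-scalar matrix depends only on the eigenvalue-ratio invariant $j=\tau^2/\delta\in\mathbb{P}^1$, which is preserved under scaling; density then forces all but finitely many values of $j$ to occur. The remaining work---hitting the finitely many exceptional ratios (including $\det p=0$ and the repeated-eigenvalue value), the nonzero nilpotents, and the nonzero scalar matrices---would be carried out by explicit substitutions that use multilinearity to produce values of $p$ of the required shape, together with the cone and conjugation invariance to fill out each class. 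I expect this realization of the boundary classes, rather than the generic density itself, to be the crux of the argument.
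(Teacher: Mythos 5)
Your reduction has two sound pieces: multilinearity gives $p(0,a_2,\dots,a_m)=0$, so $0\in\Image p$ and the ``non-nilpotent trace-zero'' case of Theorem~\ref{imhom} is indeed impossible; and you correctly isolate the real problem as upgrading ``dense'' to ``all of $M_2(K)$''. But that upgrade is exactly where your proposal stops being a proof. The dangerous case is the set $\tilde K$ of non-nilpotent, non-diagonalizable matrices: $M_2(K)\setminus\tilde K$ is a dense, conjugation-invariant cone that contains nilpotents, scalars, and matrices of \emph{every} eigenvalue ratio (the scalars already realize ratio $1$), so it passes every test you propose --- cone structure, conjugacy invariance, surjectivity of the ratio invariant $j$, and nonvanishing of $\tr p$. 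Indeed, Example~\ref{coneex2}(iv) of the paper exhibits a completely homogeneous polynomial whose image is precisely $M_2(K)\setminus\tilde K$, so no argument using only the properties you invoke can close this gap; the multilinearity must enter in an essential and nontrivial way. Your fallback of ``explicit substitutions'' cannot be uniform in $p$, and your determinant idea (find $a_2,\dots,a_m$ making $a_1\mapsto p(a_1,\dots,a_m)$ surjective) is itself unjustified: a union of proper subspaces $V_{a_2,\dots,a_m}$ over an infinite parameter space can perfectly well be dense, or even all of $M_2(K)$, so the generic nonvanishing of that $4\times 4$ determinant does not follow from density and is not established.

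For comparison, the paper does not route through Theorem~\ref{imhom} at all. It first proves (Lemma~\ref{thm1}, via the graph-theoretic Lemma~\ref{graph} on evaluations at matrix units and Lemmas~\ref{2two} and~\ref{2general_one}) that $\Image p$ is $\{0\}$, $K$, $sl_2$, $M_2(K)$, or $M_2(K)\setminus\tilde K$ --- i.e.\ it reaches exactly the point your reduction reaches. The entire remaining content (Lemmas~\ref{char2} and~\ref{char_n_2}, the most technical part of the paper) is devoted to excluding $M_2(K)\setminus\tilde K$, and it does so not by substitutions but by showing that this hypothesis would force the eigenvalues $\lambda_1,\lambda_2$ of $p$ to be polynomial invariant functions (hence trace polynomials by Donkin's theorem), making $(p-\lambda_1I)(p-\lambda_2I)=0$ a zero-divisor relation in the algebra of generic matrices with traces, contradicting Proposition~\ref{Am1}. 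That mechanism, or something of comparable strength, is what your proposal is missing.
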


Whereas for $2 \times 2$ matrices one has a full positive answer
for multilinear polynomials, the situation is ambiguous for
homogeneous polynomials, since, as we shall see, certain invariant
sets cannot occur as their images. For the general non-homogeneous
case, the image of a polynomial need not be dense, even if it is
non-central and takes on values of nonzero trace, as we see in
Example~\ref{nondense}. In this paper, we start with the
homogeneous case (which includes the completely homogeneous case,
then discuss the nonhomogeneous case, and finally give the
complete picture for the multilinear case.

The proofs of our theorems use some algebraic-geometric tools in
conjunction with ideas from  graph theory. The final part of the
proof of Theorem~\ref{main} uses the First Fundamental Theorem of
Invariant Theory (that in the case $\Char K=0$, invariant
functions evaluated on matrices are polynomials involving traces),
proved by Helling~\cite{Hel}, Procesi~\cite{P}, and Razmyslov
\cite{Ra3}. The formulation  in positive characteristic, due to
  Donkin ~\cite{D}, is somewhat more intricate.
$\GL_n(K)$ acts on $m-$tuples of $n\times n$-matrices by
simultaneous conjugation.
\begin{theorem*}[Donkin~\cite{D}]
For any $m, n\in \N$, the algebra of polynomial invariants
$K[M_n(K)^m]^{\GL_n(K)}$ under $\GL_n(K)$  is generated by the
trace functions
\begin{equation}\label{Donk} T_{i,j}(x_1,x_2,\dots,x_m) =
\operatorname{Trace}(x_{i_1}x_{i_2}\cdots
x_{i_r},\bigwedge\nolimits^jK^n),\end{equation} where
$i=(i_1,\dots,i_r),$ all $i_l\leq m,$ $r\in\N, j>0,$ and
$x_{i_1}x_{i_2}\cdots x_{i_r}$ acts as a linear transformation on
the exterior algebra $\bigwedge^jK^n$.

\end{theorem*}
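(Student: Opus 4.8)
The plan is to deduce the statement from the representation theory of $\GL_n$ as a reductive group in arbitrary characteristic, funneled through the characteristic-free First Fundamental Theorem for the natural action on mixed tensors. Write $V=K^n$ for the natural module, so that $M_n(K)=\End(V)\cong V\otimes V^*$ as a $\GL_n$-module under conjugation. Then $K[M_n(K)^m]$ is the symmetric algebra on $m$ copies of $V^*\otimes V$, graded by total degree, and since the scalar torus $K^\times\subset\GL_n$ acts trivially on each $V\otimes V^*$, only multihomogeneous pieces containing equally many factors $V$ and $V^*$ can carry invariants. I would first isolate such a degree-$d$ piece, a symmetric power built from $V^*\otimes V$ that surjects from the tensor power $V^{\otimes d}\otimes(V^*)^{\otimes d}$.

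Next I would invoke the FFT for $GL(V)$ on mixed tensors, valid over any field: the space of $\GL_n$-invariants in $V^{\otimes d}\otimes(V^*)^{\otimes d}$ is spanned by the complete contractions $c_\sigma$ indexed by $\sigma\in S_d$, where $c_\sigma$ pairs the $k$-th covector slot with the $\sigma(k)$-th vector slot. Translating $c_\sigma$ back into a function of the matrix arguments $x_1,\dots,x_m$ yields, according to the cycle type of $\sigma$, a product of traces of monomials $\tr(x_{i_1}\cdots x_{i_r})$. The surjection from the tensor power onto the symmetric power induces a map on invariants. In characteristic zero the symmetric tensors are a direct summand of the full tensor power, so this map is onto, and restitution recovers every polynomial invariant: one obtains the classical Procesi--Razmyslov statement with only the functions $T_{i,1}=\tr(x_{i_1}\cdots x_{i_r})$ needed.

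The hard part is positive characteristic. There the symmetric power is only a quotient, not a summand, of the tensor power, so the induced map on invariants need not be surjective; the obstruction lives in $H^1(\GL_n,-)$, and recovering a homogeneous invariant from its multilinear polarization would require division by factorials. I would replace this naive argument by the structural input that each graded piece of $K[M_n(K)^m]$ is a polynomial $\GL_n$-module admitting a good filtration, that tensor and symmetric powers of good-filtration modules again have good filtration (Donkin, Mathieu, S.~Wang), and hence that the relevant higher cohomology vanishes and the invariants can be counted layer by layer. The single-matrix case $m=1$ already exhibits the new generators: $K[M_n(K)]^{\GL_n}$ is the ring of symmetric functions of the eigenvalues, generated by the coefficients of the characteristic polynomial, which are precisely $e_j(x)=\tr(x,\exterior^j V)=T_{(1),j}$; and because Newton's identities require inverting the integers up to $n$, in characteristic $p\le n$ these $e_j$ are genuinely not polynomials in the power sums $\tr(x^k)$. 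Assembling the contraction description with the good-filtration count then shows the full invariant ring is generated by the exterior-power trace functions $T_{i,j}(x_1,\dots,x_m)=\tr(x_{i_1}\cdots x_{i_r},\exterior^j V)$ for all multi-indices $i$ and all $j>0$. The main obstacle throughout is exactly this characteristic-$p$ gap between power sums and elementary symmetric functions: it forces the functions $T_{i,j}$ with $j>1$ into the generating set and obliges one to trade the characteristic-zero polarization argument for the more delicate good-filtration machinery.
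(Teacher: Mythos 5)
First, a point of comparison: the paper does not prove this statement at all --- it is quoted as Donkin's theorem from \cite{D} and used as a black box (in the proof of Lemma~\ref{char_n_2}), so there is no internal proof to measure your attempt against. That said, your outline does track the strategy of the actual literature: the characteristic-free FFT for mixed tensors, the observation that polarization/restitution fails in characteristic $p$, the good-filtration machinery, and the $m=1$ case explaining why the exterior-power traces $T_{i,j}$ with $j>1$ must enter the generating set. Those ingredients are all correctly identified.

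As a proof, however, there is a genuine gap at the decisive step, which you compress into ``assembling the contraction description with the good-filtration count then shows the full invariant ring is generated by the $T_{i,j}$.'' Two things are missing. First, the input that $K[M_n(K)^m]$ has a good filtration as a $\GL_n$-module under simultaneous conjugation is itself a hard theorem: it rests on closure of good filtrations under tensor product (the Donkin conjecture, proved in general by Mathieu) together with the good filtration of $K[G]$ under conjugation, and the conjugation action is precisely the delicate case since $V\otimes V^*$ is not a polynomial representation, so its symmetric powers are not obviously of the required type; you name the references but do not reduce your situation to them. Second, and more seriously, even granting that cohomology vanishing pins down the dimension of the degree-$d$ invariants (equal to the number of trivial sections in the filtration, hence to the characteristic-zero dimension), you must still prove that the subalgebra generated by the $T_{i,j}$ \emph{attains} that dimension in every degree. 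This is the combinatorial core of Donkin's paper --- producing enough linearly independent products of the $T_{i,j}$, indexed by suitable standard data --- and nothing in your sketch addresses it. The contraction description only spans the multilinear invariants, and you have yourself (correctly) observed that restitution from the multilinear layer is not surjective in characteristic $p$, so it cannot supply the spanning argument. Until that step is carried out, what you have is a roadmap rather than a proof.
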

\begin{rem*}
For $n=2$ we have a polynomial function in expressions of the form
$\operatorname{Trace}(A,\bigwedge^2K^2)$ and $\tr A$ where $A$ is
monomial. Note that $\operatorname{Trace}(A,\bigwedge^2K^2)=\det
A$.
\end{rem*}
(The Second Fundamental Theorem,   dealing with relations between
invariants, was proved by Procesi  \cite{P} and Razmyslov
\cite{Ra3} in the case $\Char K=0$ and by Zubkov \cite{Zu} in the
case $\Char K>0$.)

Other works on polynomial maps evaluated on matrix algebras
include \cite{W}, \cite{GK}, who investigated maps that preserve
zeros of multilinear polynomials.

\section{Definitions and basic preliminaries}\label{def1} By $K\langle x_1,\dots,x_m\rangle$ we
denote the free $K$-algebra generated by noncommuting variables
$x_1,\dots,x_m$, and refer to the elements of $K\langle
x_1,\dots,x_m\rangle$ as {\it polynomials}. Consider any algebra
$R$ over a field $K$. A polynomial $p\in K\langle
x_1,\dots,x_m\rangle$ is called a {\it polynomial identity} (PI)
of the algebra $R$ if $p(a_1,\dots,a_m)=0$ for all
$a_1,\dots,a_m\in R$;  $p\in K\langle x_1,\dots,x_m\rangle$ is a
{\it central polynomial} of $R$, if for any $a_1,\dots,a_m\in R$
one has $\mbox{$p(a_1,\dots,a_m)\in \Cent(R)$}$ but $p$ is not a
PI of $R$.
A polynomial $p$ (written as a sum of monomials) is called {\it
semi-homogeneous of weighted degree $d$} with (integer) {\it
weights} $(w_1,\dots,w_m)$  if for each monomial $h$ of $p$,
taking $d_j$ to be the degree of $x_{j}$ in $p$, we have
$$d_1w_1+\dots+d_nw_n=d.$$ A
semi-homogeneous polynomial with weights $(1,1,\dots, 1)$ is
called $\it{homogeneous}$ of degree $d$.

A polynomial $p$ is {\it completely homogeneous} of multidegree
$(d_1,\dots,d_m)$ if each variable $x_i$ appears the same number
of times $d_i$ in all monomials.
A polynomial $p\in K\langle x_1,\dots,x_m\rangle$ is called {\it
multilinear} of degree $m$ if it is linear (i.e. homogeneous of
multidegree $(1,1,\dots,1)$). Thus, a polynomial is multilinear if
it is a polynomial of the form
$$p(x_1,\dots,x_m)=\sum_{\sigma\in S_m}c_\sigma
x_{\sigma(1)}\cdots x_{\sigma(m)},$$ where $S_m$ is the
symmetric group in $m$ letters and the coefficients $c_\sigma$ are
constants in $K$.

 We need a slight
modification of Amitsur's theorem, which is well known:
\begin{prop}\label{Am1} The algebra of generic matrices with traces
is a domain which can be embedded in the division algebra UD of
central fractions of Amitsur's algebra of generic matrices.
Likewise, all of the functions in Donkin's theorem can be embedded
in UD.
\end{prop}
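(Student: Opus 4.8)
The plan is to reduce everything to the single formal observation that \emph{any subalgebra of a division ring is a domain}, so that once the trace functions are shown to lie inside UD the conclusion is immediate. First I would recall the precise content of Amitsur's theorem as used in the excerpt: the algebra $GM$ of generic $n\times n$ matrices is a domain, and its ring of central fractions is the division algebra UD. Since UD has no zero divisors, every subalgebra of UD is automatically a domain and is thereby embedded in UD; so the entire task is to locate the traces (and afterwards Donkin's functions $T_{i,j}$) inside UD.

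Next I would show that each trace $\tr(w)$ of a monomial $w$ in the generic matrices lies in the center $Z=\Cent(\text{UD})$ of UD. The key point is that $\tr(w)$ is a scalar matrix, hence commutes with all of $GM$ and therefore with all of UD; consequently it is enough to verify that $\tr(w)$ belongs to UD at all, for in that case it lands automatically in $Z$. To place it inside UD I would invoke the standard identification of $Z$ with the field of rational $\GL_n(K)$-invariants of $m$-tuples of matrices: the center of the generic division algebra is the fraction field of the ring of invariants, and by the First Fundamental Theorem (Procesi--Razmyslov, and Donkin's theorem quoted above) that ring is generated by the trace functions. Hence every $\tr(w)$ lies in $Z\subseteq\text{UD}$.

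Combining the two steps, the algebra of generic matrices with traces is exactly the subalgebra of UD generated by $GM$ together with the central elements $\tr(w)$; being a subalgebra of the division algebra UD, it is a domain, which settles the first assertion. For the final sentence I would observe that the functions $T_{i,j}$ of Donkin's theorem are themselves $\GL_n(K)$-invariants --- for $n=2$ they are just $\tr A$ and $\det A=\operatorname{Trace}(A,\bigwedge^2 K^2)$ for monomials $A$ --- and so lie in the same invariant field $Z$, which embeds them in UD as well.

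The step I expect to be the main obstacle is the identification of $\Cent(\text{UD})$ with the rational invariant field, together with the verification that the traces are integral over, and share the fraction field of, the center $\Cent(GM)$ of the plain generic matrix algebra; this is where the invariant-theoretic input (the FFT already quoted in the excerpt) is genuinely used, whereas the passage from ``lies in UD'' to ``is a domain'' is purely formal.
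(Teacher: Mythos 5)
Your proposal is correct, but it reaches the one non-formal step --- that the traces actually lie in UD --- by a genuinely different route than the paper. The shared part is the skeleton: a subalgebra of the division algebra UD is automatically a domain, so everything reduces to locating the traces and Donkin's functions inside UD. The paper does this with an explicit PI-theoretic identity (\cite[Theorem 1.4.12]{Row}, \cite[Theorem~J]{BR}): for an $n^2$-alternating polynomial $f$ one has $\tr(T)\,f(a_1,\dots,a_t,r_1,\dots,r_m)=\sum_{k} f(a_1,\dots,Ta_k,\dots,a_t,r_1,\dots,r_m)$, and likewise \eqref{trace2pol0} for each characteristic coefficient; dividing by the central value $f(a_1,\dots,a_t,r_1,\dots,r_m)$ exhibits every trace --- and every Donkin function $T_{i,j}$, these being characteristic coefficients of monomials --- as an explicit ratio of central-polynomial values, hence an element of UD. You instead invoke the theorem that $\Cent(\mathrm{UD})$ is the field of rational $\GL_n(K)$-invariants, so that trace functions, being invariant, lie in $\Cent(\mathrm{UD})\subseteq\mathrm{UD}$. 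That theorem is true and citable (Procesi for $\Char K=0$), so your argument does close, and it is shorter and more conceptual; what it costs is self-containedness and a circularity risk: the standard proofs of that center identification are themselves obtained by writing traces and characteristic coefficients as central-polynomial ratios, essentially the paper's equation \eqref{trace2pol}, so you must be careful to cite a source whose proof does not presuppose exactly what Proposition~\ref{Am1} asserts. Two minor points: in positive characteristic the polynomial invariants are generated by the characteristic-coefficient functions $T_{i,j}$ (Donkin), not by traces of monomials alone, so the FFT should be quoted in that form --- although in your direction you only need the trivial inclusion (traces are invariant), not the FFT's generation statement; and the integrality of the traces over the center of the plain generic matrix algebra, which you flag as the expected obstacle, is never actually needed --- membership in the invariant field already suffices.
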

\begin{proof} Any trace function can be expressed as the ratio of two
central polynomials, in view of \cite[Theorem 1.4.12]{Row}; also
see \cite[Theorem~J, p.~27]{BR} which says for any characteristic
coefficient $\alpha_k $ of the characteristic polynomial
$\lambda^t + \sum_{k=1}^t (-1)^k \alpha _k \lambda ^{t-k}$ that
\begin{equation}\label{trace2pol0}
\alpha_k f(a_1, \dots, a_t, r_1, \dots, r_m) = \sum _{k=1}^t
f(T^{k_1}a_1, \dots, T^{k_t} a_t,  r_1, \dots, r_m) ,
\end{equation}
summed over all vectors $(k_1, \dots, k_t)$ where each $k_i \in \{
0, 1 \}$ and $\sum k_i = t,$ where $f$ is any $t-$alternating
polynomial (and $t = n^2$). In particular,
\begin{equation}\label{trace2pol}
\tr(T)f(a_1, \dots, a_t, r_1, \dots, r_m) = \sum _{k=1}^t f(a_1,
\dots, a_{k-1}, Ta_k, a_{k+1} , \dots, a_t,  r_1, \dots, r_m) ,
\end{equation}
so any trace of a polynomial belongs to UD. In general, the
function \eqref{Donk} of Donkin's theorem can be written as a
characteristic equation, so we can apply Equation~
\eqref{trace2pol0}.
\end{proof}

Here is one of the main tools for our investigation.

\begin{definition}
A {\it cone} of $M_n(K)$ is a subset closed under multiplication
by nonzero constants. An  {\it invariant cone} is a cone invariant
under conjugation. An invariant cone  is  {\it irreducible} if it
does not contain any nonempty  invariant cone.
\end{definition}

\begin{examples}\label{coneex} Examples of  invariant cones of $M_n(K)$ include:
\begin{enumerate}\eroman  \item The set of diagonalizable matrices.
 \item The set of non-diagonalizable matrices.
  \item The set $K$ of scalar matrices.
  \item The set of nilpotent matrices.
    \item The set $sl_n$ of  matrices having trace zero.
\end{enumerate}
\end{examples}

\section{Images of Polynomials}
For any polynomial $p\in K\langle x_1,\dots,x_m\rangle$, the
$image$ of $p$ (in $R$) is defined as
$$\Image p=\{A\in R:
\ \text{there exist}\ a_1,\dots,a_m\in R\ \text{such that}\
p(a_1,\dots,a_m)=A \}.$$

\begin{rem}\label{cong}
$\Image p$ is invariant under conjugation, since $$\alpha
p(x_1,\dots,x_m)\alpha^{-1}=p(\alpha x_1\alpha^{-1},\alpha
x_2\alpha^{-1},\dots,\alpha x_m\alpha^{-1})\in\Image p,$$ for any
nonsingular $\alpha\in M_n(K)$.
\end{rem}

\begin{lemma}\label{nilp} If $\Char K$ does not divide  $n$, then any non-identity $p(x_1, \dots, x_m)$ of $M_n(K)$ must either be a central polynomial
or take on a value which is a matrix whose eigenvalues are not all
the same.\end{lemma}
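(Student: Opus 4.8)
The plan is to prove the contrapositive: assuming $p$ is a non-identity that is \emph{not} central and whose values \emph{never} have two distinct eigenvalues, I will derive that $p$ is nonetheless central, a contradiction. So suppose every value $A=p(a_1,\dots,a_m)$ has all its eigenvalues equal. Over the algebraic closure this says the characteristic polynomial of $A$ is $(x-\lambda)^n$ with common eigenvalue $\lambda=\frac1n\tr A$; equivalently, $A-\frac1n(\tr A)I$ is nilpotent. This is exactly the place where the hypothesis $\Char K\nmid n$ is used: it is what allows one to recover $\lambda$ from the trace and to form the scalar $\frac1n\tr A$ in the first place. If instead $\Char K\mid n$ the trace cannot detect the scalar part, and this is why the hypothesis cannot be dropped.

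Next I would pass to the generic setting. Let $P=p(\xi_1,\dots,\xi_m)$ be the value of $p$ on generic $n\times n$ matrices, an element of the algebra of generic matrices with traces, which by Proposition~\ref{Am1} embeds in the division algebra UD. Since $p$ is not a PI we have $P\neq 0$, and since $p$ is not central we have $P\notin\Cent(\mathrm{UD})$. Using $\Char K\nmid n$, set $\lambda=\frac1n\tr P\in\Cent(\mathrm{UD})$ and form the traceless part $\tilde P=P-\lambda$, which again lies in the trace ring and hence in UD.

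I would then show $\tilde P=0$, which is essentially Chuang's observation quoted in the introduction. Every $K$-specialization of $\tilde P$ is the nilpotent matrix $A-\frac1n(\tr A)I$ attached to the corresponding value $A$; since a nilpotent $n\times n$ matrix $N$ satisfies $N^n=0$ directly, the entries of $\tilde P^n$ are polynomials in the generic entries that vanish at every $K$-point. As $K$ is infinite in the setting of interest, these entries vanish identically, so $\tilde P^n=0$ in $M_n(F)$ and hence in UD. Because UD is a domain, $\tilde P=0$, i.e. $P=\lambda$ is central, contradicting $P\notin\Cent(\mathrm{UD})$. Note that this route uses only $N^n=0$, so it avoids both Cayley--Hamilton and Newton's identities (the latter would force the passage from the power sums $\tr(\tilde P^k)$ to the elementary symmetric functions and thereby demand the stronger hypothesis $\Char K> n$).

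The main obstacle is the transition from the pointwise condition ``every value has all eigenvalues equal'' to the algebraic identity $\tilde P^n=0$: this is where infinitude of $K$ enters, to upgrade vanishing at all $K$-points to vanishing of the polynomial entries, and where the embedding into the division algebra UD from Proposition~\ref{Am1} is indispensable, since it is the domain property of UD that forces the nilpotent element $\tilde P$ to be zero. Everything else is bookkeeping; the only use of the numerical hypothesis is in defining the central scalar $\lambda=\frac1n\tr P$.
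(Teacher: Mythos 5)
Your argument is correct and is essentially the paper's own proof: both form the traceless part $p-\frac1n\tr(p)$ in the algebra of generic matrices with traces, observe it is nilpotent because every specialization is, and invoke the domain property from Proposition~\ref{Am1} to conclude it vanishes, forcing $p$ to be central. You merely spell out the intermediate steps (why $\tilde P^n=0$ follows from vanishing at all $K$-points of an infinite field, and why $\Char K\nmid n$ is needed to form $\frac1n\tr P$) that the paper leaves implicit.
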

\begin{proof} Otherwise $p(x_1, \dots, x_m)- \frac 1n \tr(p(x_1, \dots,
x_m))$
is a nilpotent element in the algebra of generic matrices with
traces, so by Proposition~\ref{Am1} is 0, implying $p$ is central.
\end{proof}

 Let us continue with the following easy but crucial
lemma.

\begin{lemma}\label{cone_conj} Suppose the field $K$ is closed
under $d$-roots. If the image of a semi-homogeneous polynomial $p$
of weighted degree $d$  intersects an irreducible invariant cone
$C$ nontrivially, then $C \subseteq \Image p$.
\end{lemma}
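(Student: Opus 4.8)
The plan is to show that $\Image p$ is itself an invariant cone and then to exploit the minimality built into the definition of irreducibility. The conjugation-invariance of $\Image p$ is already recorded in Remark~\ref{cong}, so the only substantive point is that $\Image p$ is closed under multiplication by nonzero scalars.

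For that I would use the defining scaling property of a semi-homogeneous polynomial. Writing $p=\sum_h h$ as a sum of monomials, each of weighted degree $d$ with weights $(w_1,\dots,w_m)$, I substitute $x_j\mapsto \lambda^{w_j}x_j$ for a parameter $\lambda\in K$. In a monomial $h$ in which $x_j$ occurs $d_j$ times, this substitution pulls out the factor $\lambda^{\sum_j d_jw_j}=\lambda^{d}$; since the exponent is the same $d$ for \emph{every} monomial, it factors out globally, giving
$$p(\lambda^{w_1}a_1,\dots,\lambda^{w_m}a_m)=\lambda^{d}\,p(a_1,\dots,a_m)$$
for all $a_1,\dots,a_m$. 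Consequently, if $A\in\Image p$ then $\lambda^{d}A\in\Image p$ for every $\lambda\in K$. Here is where the hypothesis that $K$ is closed under $d$-roots enters: every nonzero $c\in K$ can be written as $c=\lambda^{d}$ for some nonzero $\lambda\in K$, so $cA\in\Image p$. Thus $\Image p$ is closed under multiplication by nonzero constants, i.e.\ it is a cone, and combined with Remark~\ref{cong} it is an invariant cone.

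To finish, let $A$ be a nonzero matrix in $\Image p\cap C$ (this is what ``intersects nontrivially'' provides). The intersection of two invariant cones is again an invariant cone contained in $C$, and it is nonempty since it contains $A\neq 0$. By the irreducibility of $C$, which forces any nonempty invariant cone contained in $C$ to be all of $C$, we conclude $\Image p\cap C=C$, whence $C\subseteq\Image p$.

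I expect the whole argument to be routine once the scaling identity is in hand; the only place demanding care is verifying that the single exponent $\lambda^{d}$ factors out uniformly, which is exactly the semi-homogeneity hypothesis, and tracking that the $d$-root closure of $K$ is precisely what upgrades ``closed under $\lambda^{d}$-scaling'' to ``closed under arbitrary nonzero scaling.'' There is no serious obstacle beyond being careful that $\lambda\neq 0$, so that $\lambda^d\neq 0$ and the cone condition, which concerns only nonzero constants, genuinely applies.
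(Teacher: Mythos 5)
Your proof is correct and follows essentially the same route as the paper's: both establish the scaling identity $p(c^{w_1/d}x_1,\dots,c^{w_m/d}x_m)=c\,p(x_1,\dots,x_m)$ using the $d$-root closure of $K$ to show $\Image p$ is a cone, and then invoke conjugation-invariance and the irreducibility of $C$. In fact you spell out the final step (that $\Image p\cap C$ is a nonempty invariant cone contained in $C$, hence equals $C$) which the paper's proof leaves implicit, so nothing further is needed.
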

\begin{proof}

If $A\in\Image p$ then $A=p(x_1,\dots,x_m)$ for some $x_i\in
M_n(K)$. Thus for any $c\in K$,
$cA=p(c^{w_1/d}x_1,c^{w_2/d}x_2,\dots,c^{i_m/d}x_m) \in\Image p$,
where $(w_1,\dots,w_m)$ are the weights. This shows that $\Image
p$ is a cone.
\end{proof}
\begin{rem} When the polynomial $p$ is multilinear, we take the
weights $w_1 = 1$ and $w_i = 0$ for all $i>1,$ and thus  do not
need any assumption on $K$  to show that the image of any
multilinear polynomial is an invariant cone.
\end{rem}
\begin{lemma}\label{scalar}
If  $\Image p$ consists only of diagonal matrices, then the image
$\Image p$ is either~$\{0\}$ or the set $K$ of scalar matrices.
\end{lemma}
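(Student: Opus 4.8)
The plan is to leverage the conjugation-invariance of $\Image p$ from Remark~\ref{cong} to force every matrix in the image to be scalar, and then to invoke the cone structure to pin down exactly which scalar matrices occur. First I would observe that if $\Image p$ consists only of diagonal matrices, then in particular every conjugate of every image matrix is again diagonal, since $\Image p$ is invariant under conjugation. So the crux is a purely linear-algebraic claim: a diagonal matrix $A$ all of whose conjugates are diagonal must in fact be scalar.

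To establish this claim, I would conjugate $A=\diag(\lambda_1,\dots,\lambda_n)$ by the elementary transvections $g_{ij}=I+E_{ij}$ for $i\neq j$, each of which is invertible over any field (indeed $\det g_{ij}=1$). A direct computation using $AE_{ij}=\lambda_i E_{ij}$ and $E_{ij}A=\lambda_j E_{ij}$ gives $g_{ij}Ag_{ij}^{-1}=A+(\lambda_j-\lambda_i)E_{ij}$. For this conjugate to remain diagonal one needs $\lambda_i=\lambda_j$; ranging over all pairs $i\neq j$ forces all diagonal entries of $A$ to coincide, so $A$ is scalar. Hence $\Image p$ consists only of scalar matrices.

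It then remains to decide between the two alternatives. By Lemma~\ref{cone_conj} the image is a cone, so if it contains a single nonzero scalar matrix $cI$ with $c\neq 0$, then it contains $\alpha cI$ for every nonzero $\alpha\in K$, that is, every nonzero scalar matrix. Since $p$ is semi-homogeneous of positive weighted degree it has vanishing constant term, so evaluating at $x_1=\dots=x_m=0$ shows $0\in\Image p$ as well; thus in this case $\Image p$ is all of $K$. If instead $\Image p$ contains no nonzero scalar, then the only remaining possibility is $\Image p=\{0\}$, giving the desired dichotomy.

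I expect no serious obstacle here; the only points requiring care are that the transvection computation be valid over an arbitrary field, which it is, and that the passage from \emph{subset of scalars} to the exact sets $\{0\}$ or $K$ correctly combines the cone property of Lemma~\ref{cone_conj} (hence the ambient assumption that $K$ is closed under $d$-roots) with the fact that $0$ automatically lies in the image.
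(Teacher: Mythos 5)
Your argument is essentially the paper's: the paper likewise observes that a nonscalar diagonal $A=\diag\{\lambda_1,\dots,\lambda_n\}$ with $\lambda_i\neq\lambda_j$ is conjugate to the nondiagonal matrix $A+e_{ij}$, contradicting the hypothesis --- your transvection computation $g_{ij}Ag_{ij}^{-1}=A+(\lambda_j-\lambda_i)E_{ij}$ is exactly that conjugation, just run in the contrapositive direction. The paper stops there and simply asserts the dichotomy $\{0\}$ versus $K$, so your extra cone/constant-term step is a harmless additional justification; just note that the lemma as stated carries no semi-homogeneity hypothesis on $p$, so that step should be read as applying in the contexts where the lemma is invoked rather than as following from the lemma's own hypotheses.
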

\begin{proof}
Suppose that some nonscalar diagonal matrix
$A=\diag\{\lambda_1,\dots,\lambda_n\}$ is in the image. Therefore
$\lambda_i\neq\lambda_j$ for some $i$ and $j$. The matrix
$A'=A+e_{ij}$ (here $e_{ij}$ is the matrix unit) is conjugate to
$A$ so by Remark~\ref{cong} also belongs to $\Image p$. However
$A'$ is not diagonal, a contradiction.
\end{proof}
\begin{lemma}\label{graph}
Assume that the $x_i$ are matrix units. Then $p(x_1,\dots,x_m)$ is
either $0$, or~$c\cdot e_{ij}$ for some $i\neq j$, or a diagonal
matrix.
\end{lemma}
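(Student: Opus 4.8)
The plan is to translate the evaluation of $p$ on matrix units into a question about Eulerian walks in a directed multigraph, which is where the graph-theoretic input enters. Write each variable as $x_i = e_{r_i c_i}$ and regard it as a directed edge from vertex $r_i$ to vertex $c_i$ on the vertex set $\{1,\dots,n\}$. Using that each variable appears the same number of times $d_i$ in every monomial of $p$ (i.e.\ $p$ is completely homogeneous), every monomial traverses exactly the same multiset $E$ of directed edges, each edge $r_i\to c_i$ appearing with multiplicity $d_i$; only the order of traversal differs from one monomial to the next.

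Next I would use the multiplication rule $e_{ab}e_{cd}=\delta_{bc}e_{ad}$: a monomial $x_{\sigma(1)}\cdots x_{\sigma(N)}$ evaluates to the product of the corresponding matrix units, which is nonzero precisely when the listed edges form a connected directed walk (the head of each edge equals the tail of the next), and in that case the product equals $e_{ab}$, where $a$ is the tail of the first edge and $b$ the head of the last. Thus a monomial contributes a nonzero term exactly when its ordering realizes an Eulerian walk on $E$, and that term records only the start and end vertices of the walk.

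Now I would invoke the degree bookkeeping for directed Eulerian walks. For any walk traversing all of $E$, every interior visit to a vertex consumes one incoming and one outgoing edge, so the start vertex has out-degree exceeding in-degree by one, the end vertex has in-degree exceeding out-degree by one, and every other vertex is balanced. This splits into three cases. If $E$ is disconnected or its in/out-degrees fail the Eulerian condition, no ordering is a valid walk, every monomial vanishes, and $p=0$. If every vertex of $E$ is balanced, then every Eulerian walk is closed, so each surviving monomial equals some $e_{aa}$ and $p$ is a sum of diagonal matrix units, hence diagonal. Finally, if exactly one vertex $a$ has out-excess one and one vertex $b$ has in-excess one, then necessarily $a\neq b$, and the degree count forces every Eulerian walk to start at $a$ and end at $b$; hence every nonzero monomial equals $e_{ab}$ and $p=c\,e_{ab}$ with $i=a\neq b=j$.

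The step I expect to be the crux is the endpoint-uniqueness in the last case: one must verify that the start and end vertices of an Eulerian walk on $E$ are completely determined by the degree sequence, so that distinct surviving monomials cannot produce two different off-diagonal units. This, together with the observation that balanced degrees force closed (hence diagonal) walks, is exactly the graph-theoretic content that yields the stated trichotomy; the connectivity caveat is handled by noting that a disconnected $E$ simply annihilates every monomial.
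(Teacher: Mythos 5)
Your proof is correct and follows essentially the same route as the paper's: both encode the matrix units as directed edges of a multigraph, identify the nonvanishing monomials with Eulerian walks, and use degree counting to force every surviving monomial to have the same start and end vertices, yielding the trichotomy $0$, $c\,e_{ij}$ ($i\neq j$), or diagonal. Your write-up is in fact slightly sharper than the paper's, since you use the directed in/out-degree criterion (rather than parity of total degrees) and make explicit the homogeneity hypothesis --- each variable appearing the same number of times in every monomial --- which the paper leaves implicit in its multilinear setting.
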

\begin{proof}
Suppose that the $x_i$ are matrix units $e_{k_i,l_i}$. Then the
product $x_1\cdots x_m$ is nonzero if and only if $l_i=k_{i+1}$
for each $i$, and in this case this product is equal to
$e_{k_1,l_m}$. If $x_i$ are such that there is at least one
$\sigma\in S_n$ such that $x_{\sigma(1)}\cdots x_{\sigma(m)}$ is
nonzero then we can consider a graph on $n$ vertices whereby we
connect vertex $i$ with vertex $j$ by an oriented edge if there is
a matrix $e_{ij}$ in our set $\{x_1,x_2,\dots,x_m\}$. It can
happen that we will have more than one edge that connects $i$ to
$j$ and it is also possible that we will have edges connecting a
vertex to itself.  The evaluation $p(x_1,\dots,x_m)\neq 0$ only if
there exists an Eulerian cycle or an Eulerian path in the graph.
This condition is necessary but is not sufficient. From graph
theory we know that there exists an Eulerian path only if the
degrees of all vertices but two are even, and the degrees of these
two vertices are odd. Also we know that there exists an Eulerian
cycle only if the degrees of all vertices are even. Thus when
$p(x_1,\dots,x_m)\neq 0$,  there exists either an Eulerian path or
cycle in the graph. In the first case we have exactly two vertices
of odd degree such that one of them ($i$) has more output edges
and another ($j$) has more input edges. Thus the only nonzero
terms in the sum of our polynomial can be of the type $ce_{ij}$
and therefore the result will also be of this type. In the second
case all degrees are even. Thus there are only cycles and the
result must be a diagonal matrix.
\end{proof}

As mentioned earlier, the following result follows easily
from~\cite{Her}, with another proof given in \cite{BK}, but a
 self-contained proof is included here for completeness.
\begin{lemma}\label{linear}
If the image of $p$ is not $\{0\}$ or the set of scalar matrices
then for any $i\neq j$ the matrix unit $e_{ij}$ belongs to $\Image
p$. The linear span of   $L = \Image p$ must be either $\{0\}$,
$K$,  $sl_n$, or $M_n(K)$.
\end{lemma}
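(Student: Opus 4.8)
The plan is to prove the two assertions in turn: first that a single off-diagonal matrix unit lies in $\Image p$, then that all of them do, and finally to read off the shape of the span by a short conjugation argument. Throughout I work in the regime where $p$ is linear in each variable (the multilinear case, to which the Lie-ideal framing of the introduction reduces); this linearity is exactly what makes the matrix-unit bookkeeping below valid.

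First I would produce one off-diagonal matrix unit in the image. Since $\Image p$ is assumed to be neither $\{0\}$ nor $K$, Lemma~\ref{scalar} in contrapositive form guarantees that $\Image p$ contains a matrix $A$ that is \emph{not} diagonal. Write $A=p(A_1,\dots,A_m)$ and expand each argument in the matrix-unit basis, $A_i=\sum_{k,l}a^{(i)}_{kl}e_{kl}$. Using that $p$ is linear in each of its variables, $A$ becomes a $K$-linear combination of the substitution values $p(e_{k_1 l_1},\dots,e_{k_m l_m})$. If every such substitution value were diagonal, their linear combination $A$ would be diagonal as well; hence at least one substitution value is non-diagonal. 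By Lemma~\ref{graph} a non-diagonal substitution value must have the form $c\,e_{ij}$ with $c\neq 0$ and $i\neq j$. The crucial observation is that $c\,e_{ij}$ is an honest element of $\Image p$, so invoking the cone property (the image of a multilinear polynomial is an invariant cone, by the remark following Lemma~\ref{cone_conj}, with no hypothesis on $K$) we may rescale to conclude $e_{ij}\in\Image p$.

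Next I would upgrade this to all off-diagonal units and then compute $L$. The $e_{ij}$ with $i\neq j$ are pairwise conjugate, each being a rank-one square-zero matrix, so the conjugation invariance of $\Image p$ (Remark~\ref{cong}) gives $e_{kl}\in\Image p$ for every $k\neq l$, which is the first claim. For the span, observe that $L$, being the linear span of a conjugation-invariant set, is a conjugation-invariant subspace of $M_n(K)$. If $\Image p=\{0\}$ or $K$ the span is $\{0\}$ or $K$. Otherwise $L$ contains every $e_{ij}$ with $i\neq j$; conjugating $e_{ij}$ by the transvection $I+e_{ji}$ and combining with the off-diagonal units already present in $L$ yields $e_{ii}-e_{jj}\in L$. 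Together with the off-diagonal units, these diagonal differences span $sl_n$, so $sl_n\subseteq L$. I then split into two cases: if every value of $p$ has trace zero then $\Image p\subseteq sl_n$, forcing $L=sl_n$; if some value $v$ has $\tr v\neq 0$ then $L$ strictly contains the codimension-one subspace $sl_n$, whence $L=M_n(K)$. This exhausts the four possibilities.

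The step I expect to carry the real content, and to be the main obstacle, is the passage from an abstract non-diagonal value of $p$ to a genuine matrix-unit substitution of the form $c\,e_{ij}$: this is what lets Lemma~\ref{graph} and the cone property bite. That decomposition rests essentially on $p$ being linear in each variable, since a value of a non-multilinear polynomial need not be a linear combination of matrix-unit substitutions (already $x^2$ fails this), so the clean argument lives in the multilinear setting and any reduction from a merely semi-homogeneous hypothesis must be handled separately. The remaining delicate point is characteristic bookkeeping: one must verify that the diagonal differences $e_{ii}-e_{jj}$ genuinely fill out $sl_n$ and that $sl_n=\ker\tr$ remains a codimension-one invariant subspace even when $\Char K\mid n$ (where the scalar matrices sit inside $sl_n$), so that no conjugation-invariant subspace other than $\{0\}$, $K$, $sl_n$, and $M_n(K)$ can occur.
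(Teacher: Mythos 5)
Your proposal is correct and follows essentially the same route as the paper's own proof: apply Lemma~\ref{scalar} to get a non-diagonal value, expand the arguments in matrix units and use multilinearity together with Lemma~\ref{graph} to extract some $c\,e_{ij}$ in the image, rescale and conjugate to get all off-diagonal units, and then conclude $sl_n\subseteq L$. Your version is slightly more explicit than the paper's (the cone-property rescaling, the transvection producing $e_{ii}-e_{jj}$, and the trace dichotomy for $L$), but these are elaborations of the same argument rather than a different one.
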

\begin{proof}
Assume that the image is neither $\{0\}$ nor the set of scalar
matrices. Then by Lemma \ref{scalar} the image contains a
nondiagonal matrix  $p(x_1,\dots,x_m)=A$. Any $x_i$ is a linear
combination of matrix units. After opening brackets on the left
hand side we will have a linear combination of evaluations of $p$
on matrix units, and on the right hand side a nondiagonal matrix.
From Lemma \ref{graph} it follows that any evaluation of $p$ on
matrix units is either diagonal or a matrix unit multiplied by
some coefficient. Thus there is a matrix $e_{ij}$ for $i\neq j$ in
$\Image p$. Since any nondiagonal $e_{kl}$ is conjugate to
$e_{ij}$,
  all nondiagonal matrix units belong to the image. Thus all
matrices with zeros on the diagonal belong to the linear span of
the image. Taking matrices conjugate to these,
we obtain $sl_n\subseteq L$. Thus $L$ must be either $sl_n$ or
$M_n$.
\end{proof}
\subsection{The case $M_2(K)$}\label{n=2}
Now we consider the case $n=2$. We start by introducing the cones
of main interest to us, drawing from Example~\ref{coneex}.

\begin{examples}\label{coneex1} $ $

\begin{enumerate}\eroman
\item The set of nonzero nilpotent matrices comprise an
  irreducible invariant cone, since these all have the same
minimal and characteristic polynomial~$x^2$.
 \item The set of nonzero
 scalar
  matrices is an irreducible invariant cone.
 \item Let $\tilde K$  denote the set of non-nilpotent,
non-diagonalizable matrices in~$M_2(K).$ Note that $A\in \tilde K$
precisely when  $A$ is non-scalar, but with equal nonzero
eigenvalues, which is the case if and only if $A$ is the sum of a
nonzero scalar matrix with a nonzero nilpotent matrix. These are
all conjugate when the scalar part is the identity, i.e., for
matrices of the form $$\left(\begin{array}{cc}1 & a \\0 &
1\end{array}\right),\qquad a \ne 0$$ since these all have the same
minimal and characteristic polynomials, namely $x^2 - 2x+ 1.$ It
follows that $\tilde K$ is an irreducible invariant cone.
   \item Let $\hat K$  denote the set of non-nilpotent
  matrices in~$M_2(K)$ that have trace zero.

  When $\Char K \ne 2$, $\hat K$  is an irreducible invariant
cone,
     since any such matrix has distinct eigenvalues and thus is conjugate to
$\left(\begin{array}{cc}\lambda & 0\\0 &
-\lambda\end{array}\right)$.

 When $\Char K = 2$, $\hat K$ is an irreducible invariant cone,
     since any such matrix  is conjugate to
$\left(\begin{array}{cc}\lambda & 1\\0 & \lambda
\end{array}\right)$.
 \item  $sl_2(K)\setminus \{ 0 \}$ is the union of the two irreducible invariant cones of (i) and
 (iv). (The cases $\Char K \ne 2$  and
 $\Char K = 2$ are treated separately.)

 \item Let $C$ denote the set of nonzero  matrices which are the sum of a  scalar and a nilpotent
 matrix. Then $C$ is the union of the following
 three irreducible invariant cones: The nonzero scalar matrices,
 the nilpotent matrices, and the nonzero scalar multiples of
 non-identity
 unipotent matrices. (All  non-identity unipotent matrices are conjugate.)

\end{enumerate}
\end{examples}

 From now
on, we assume that $K$ is a quadratically closed field. In
particular, all of the eigenvalues of a matrix $A\in M_2(K)$ lie
in $K$. One of our main ideas is to consider some invariant of the
matrices in $\Image(p)$, and study the corresponding invariant
cones. Here is the first such invariant that we consider.

\begin{rem}\label{Phi}
Any non-nilpotent $2\times 2$ matrix $A$ over a quadratically
closed field has two eigenvalues $\lambda_1$ and $\lambda_2$ such
that at least one of them is nonzero. Therefore one can define the
ratio of eigenvalues, which is well-defined up to taking
reciprocals: $\frac{\lambda_1}{\lambda_2}$ and
$\frac{\lambda_2}{\lambda_1}.$ Thus, we will say that two
non-nilpotent matrices have {\it different ratios} of eigenvalues
if and only if their ratios of eigenvalues are not equal nor
reciprocal.

We do have a well-defined mapping $\Pi:\ M_2(K)\rightarrow K$
given by $A\mapsto
\frac{\lambda_1}{\lambda_2}+\frac{\lambda_2}{\lambda_1}$.  This
mapping is algebraic because
$$\frac{\lambda_1}{\lambda_2}+\frac{\lambda_2}{\lambda_1}=-2+\frac{(\tr A)^2}{\det A}.$$
\end{rem}

\begin{rem}\label{Phi2} The set of non-scalar
 diagonalizable matrices with a fixed nonzero ratio $r$ of eigenvalues (up to taking reciprocals) is an irreducible invariant cone. Indeed, this
is true since any such diagonalizable matrix is conjugate to
$$\lambda\left(\begin{array}{cc} 1 & 0\\0 & r\end{array}\right).$$
\end{rem}

\subsection{Images of semi-homogeneous polynomials}

We are ready to   prove Theorem~\ref{imhom}.

\begin{lemma}\label{linear1}
Suppose $K$ is closed under $d$-roots, as well as being
quadratically closed. If the image $\Image p$ of a
semi-homogeneous polynomial $p$ of weighted degree $d$ contains an
element of $\tilde K$, then $\Image p$ contains all of $\tilde K$.
\end{lemma}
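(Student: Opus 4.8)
The plan is to recognize this lemma as an immediate application of Lemma~\ref{cone_conj}, the only genuine input being the identification of $\tilde K$ as an irreducible invariant cone. That identification has already been carried out in Example~\ref{coneex1}(iii): every non-nilpotent, non-diagonalizable $2\times 2$ matrix is a nonzero scalar plus a nonzero nilpotent, and after normalizing the scalar part to be the identity all such matrices are conjugate to a fixed non-identity unipotent Jordan block, sharing one common minimal and characteristic polynomial. Since those unipotent blocks are mutually conjugate in every characteristic, $\tilde K$ is irreducible uniformly, with no case split on $\Char K$ required.

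First I would check that the hypotheses of Lemma~\ref{cone_conj} are in force. The assumption that $K$ is closed under $d$-roots is precisely what that lemma's proof uses to conclude that $\Image p$ is a cone, and Remark~\ref{cong} gives that $\Image p$ is invariant under conjugation; hence $\Image p$ is an invariant cone. By hypothesis $\Image p$ contains some element of $\tilde K$, so the intersection $\Image p \cap \tilde K$ is a nonempty invariant cone contained in $\tilde K$. Invoking the irreducibility of $\tilde K$, this nonempty invariant subcone can only be all of $\tilde K$, whence $\tilde K \subseteq \Image p$. In other words, the statement is exactly Lemma~\ref{cone_conj} specialized to the irreducible invariant cone $C = \tilde K$.

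I do not expect any real obstacle here, since the substantive work is absorbed into the general cone machinery of Lemma~\ref{cone_conj} together with the cone computation already performed in Example~\ref{coneex1}(iii). The single point deserving a moment's care is confirming that $\tilde K$ remains irreducible in characteristic $2$; but as the relevant unipotent matrices stay mutually conjugate independently of characteristic, irreducibility persists and the argument goes through without modification.
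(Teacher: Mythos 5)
Your proposal is correct and is exactly the paper's argument: the paper proves this lemma by citing Lemma~\ref{cone_conj} together with Example~\ref{coneex1}(iii), which identifies $\tilde K$ as an irreducible invariant cone, just as you do. Your additional remarks (spelling out that $\Image p \cap \tilde K$ is a nonempty invariant subcone of the irreducible cone $\tilde K$, and that no characteristic split is needed) simply make explicit the details the paper leaves implicit.
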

\begin{proof} This is clear from
Lemma~\ref{cone_conj}(iii) together with Example~\ref{coneex1},
since $\tilde K$ is an irreducible invariant cone.
\end{proof}
 {\bf Proof of
Theorem~\ref{imhom}.} Assume that there are matrices
$p(x_1,\dots,x_m)$ and $p(y_1,\dots,y_m)$ with different ratios of
eigenvalues in the image of $p$. Consider the polynomial matrix
$f(t)=p(tx_1+(1-t)y_1,tx_2+(1-t)y_2,\dots,tx_m+(1-t)y_m)$, and
$\Pi\circ f$ where $\Pi$ is defined in Remark~\ref{Phi}. Write
this nonconstant rational function $\frac{\tr^2 f}{\det f}$ in
lowest terms as $\frac{A(t)}{B(t)}$, where $A(t), B(t)$ are
polynomials of degree $\leq 2\deg p$ in the numerator and
denominator.

An element $c\in K$ is in $\Image (\Pi\circ f)$ iff there exists
$t$ such that $A-cB=0$ (If for some
$t^*$ $A(t^*)-cB(t^*)=0$, then 
$t^*$ would be a common root of $A$ and $B$). Let $d_c =
\deg(A-cB)$. Then $d_c \le \max(\deg A,\deg B)\leq 2\deg p$, and
$d_c=\max(\deg A,\deg B)$ for almost all $c$. Hence, the
polynomial $A-cB$ is not
  constant and thus there is a root. Thus the image of $\frac{A(t)}{B(t)}$ is Zariski
dense, implying the image of $\frac{\tr^2 f}{\det f}$ is Zariski
dense.

Hence, we may assume that  $\Image p$ consists only of matrices
having a fixed ratio $r$ of eigenvalues. If $r \ne \pm 1$, the
eigenvalues
 $\lambda_1$ and $\lambda_2$  are linear functions of $\tr\
p(x_1\dots,x_m).$ Hence $\lambda_1$ and $\lambda_2$ are the
elements of the algebra of generic matrices with traces,  which is
a domain by Proposition~\ref{Am1}. But the two nonzero elements
$p-\lambda_1I$ and $p-\lambda_2I$ have product zero, a
contradiction.

We conclude that $r = \pm 1.$ First assume $r = 1$. If $\Char K
\ne 2$, then $p$ is a PI, by~Lemma~\ref{nilp}. If $\Char K=2$ then
the image is
either $sl_2(K)$ or $\hat K$, by Example~\ref{coneex1}(v).

Thus, we may assume $r = -1$ and $\Char K\neq 2$. Hence, $\Image
p$ consists only of matrices with $\lambda_1=-\lambda_2$. By
Lemma~\ref{nilp}, there is a non-nilpotent matrix in the image of
$p$. Hence, by Example~\ref{coneex1}(v), $\Image p$ is either
$\hat K$  or strictly contains it and is all of $sl_2(K)$. \hfill
$\square$

\bigskip
We illuminate this result with some examples to show that certain
cones may be excluded.
\begin{examples}\label{coneex2}$ $
\begin{enumerate}\eroman  \item
The polynomial $g(x_1, x_2) = [x_1, x_2]^2$  has the property that
$g(A,B) = 0$ whenever $A$  is scalar, but $g$ can take on a
nonzero value whenever $A$  is non-scalar.
 Thus, $g(x_1, x_2)x_1$ takes on all values except
scalars. This polynomial is completely homogeneous, but not
multilinear. (One can linearize in $x_2$ to make $g$ linear in
each variable except $x_1,$ and the same idea can be applied to
Formanek's construction \cite{F1} of a central polynomial for any
$n$.)

\item Let $S$ be any finite subset of $K$. There exists a
completely homogeneous polynomial $p$ such that $\Image p$ is the
set of all $2\times 2$ matrices except the matrices with ratio of
eigenvalues from $S$. The construction is as follows. Consider
$$f(x)=x\cdot\prod_{\delta\in
S}(\lambda_1-\lambda_2\delta)(\lambda_2-\lambda_1\delta),$$ where
$\lambda_{1,2}$ are eigenvalues of $x$. For each $\delta$ the
product $(\lambda_1-\lambda_2\delta)(\lambda_2-\lambda_1\delta)$
is
a polynomial of $\tr\ x$ and $\tr\ x^2$. Thus $f(x)$ is a
polynomial with traces, and, as noted above (by \cite[Theorem
1.4.12]{Row}), one can rewrite each trace in $f$ as a fraction of
multilinear central polynomials (see \eqref{trace2pol} in
Proposition~\ref{Am1}).
After that we multiply the expression by the product of all the
denominators, which we can take to have value 1. We obtain a
completely homogeneous polynomial $p$ which image is the cone
under $\Image f$ and thus equals $\Image f$. The image of $p$ is
the set of all non-nilpotent matrices with ratios of eigenvalues
not belonging to $S$.

\item The image of a completely homogeneous polynomial evaluated
on $2\times 2$ matrices can also be $\hat K$. Take
$f(x,y)=[x,y]^3$. This is the product of $[x,y]^2$ and $[x,y]$.
$[x,y]^2$ is a central polynomial, and therefore $\tr\ f=0.$
However, there are no nilpotent matrices in $\Image p$ because if
$[A,B]^3$ is nilpotent then $[A,B]$ (which is a scalar multiple of
$[A,B]^3$) is nilpotent and therefore $[A,B]^2=0$ and $[A,B]^3=0$.

\item Consider the polynomial $$\qquad p(x_1,x_2,y_1,y_2) =
[(x_1x_2)^2,(y_1y_2)^2]^2 +
[(x_1x_2)^2,(y_1y_2)^2][x_1y_1,x_2y_2]^2.$$ Then $p$ takes on all
scalar values (since it becomes central by specializing $x_1
\mapsto x_2$ and $y_1 \mapsto y_2$), but also takes on all
nilpotent values, since specializing $x_1  \mapsto I + e_{12}$,
$x_2 \mapsto e_{22}$, and $y_1 \mapsto
 e_{12}$, and $y_2 \mapsto
e_{21} $ sends $p$~to $$\qquad \qquad [(e_{12}+e_{22})^2,
e_{11}^2]^2 + [(e_{12}+e_{22})^2, e_{11}^2][ e_{12}, e_{21}] \\ =
0 - e_{12}( e_{11}- e_{22}) =  e_{12}.$$

We claim that $\Image p$ does not contain any matrix~$a= p(\bar
x_1,\bar x_2,\bar y_1,\bar y_2)$ in ~$ \tilde K$. Otherwise, the
matrix
 $[(\bar x_1\bar x_2)^2,(\bar y_1\bar y_2)^2][\bar x_1\bar y_1,\bar x_2\bar y_2]^2$ would be the difference
of a matrix having equal eigenvalues and a scalar matrix, but of
trace 0, and so would have both eigenvalues 0 and thus be
nilpotent. Thus $[(\bar x_1\bar x_2)^2,(\bar y_1\bar y_2)^2]$
would also be nilpotent, implying the scalar term $[(\bar x_1\bar
x_2)^2,(\bar y_1\bar y_2)^2]^2$ equals zero, implying $a$ is
nilpotent, a contradiction.

$\Image p$ also contains all matrices having two distinct
eigenvalues. We conclude that $\Image p = M_2(K) \setminus \tilde
K.$
\end{enumerate}
\end{examples}

\begin{rem} In Example 4(iv),
The intersection $S$ of $\Image p$ with the discriminant surface
is defined by the polynomial $\tr(p(x_1, \dots, x_m))^2- 4
\det(p(x_1, \dots, x_m))= (\lambda_1 -\lambda_2)^2$.   $S$ is the
union of two irreducible varieties (its scalar matrices and the
nonzero nilpotent matrices), and thus $S$ is a reducible variety.
Thus, we see that the discriminant surface of a polynomial $p$ of
the algebra of generic matrices can be reducible, even if it is
not divisible by any trace polynomial. Such an example could not
exist for $p$ multilinear, since then, by the same sort of
argument as given in the proof of Theorem~\ref{imhom}, the
discriminant surface would give a generic zero divisor in
Amitsur's universal division algebra UD of Proposition~\ref{Am1},
a contradiction. In fact, we will also see that the image of a
multilinear polynomial cannot be as in Example~4(iv).
\end{rem}


\subsection{Images of non-homogeneous polynomials}

Now we consider briefly the general case.  One can write any
polynomial
  $p(x_1,\dots,x_m)$ as $p=h_k+\dots+h_n,$ where the
$h_i$ are semi-homogeneous polynomial of weighted degree $i$.

\begin{prop} Notation as above,
assume that there are weights $(w_1,\dots,w_m)$ that   the
polynomial $h_n$ has image dense in $M_2(K)$. Then $\Image p$ is
dense in $M_2(K).$
\end{prop}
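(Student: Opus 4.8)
The plan is to exploit the semi-homogeneity of the pieces $h_i$ through a rescaling of the variables, so as to reduce everything to the hypothesis on $h_n$. For $s\in K$ define the morphism
$$g_s(x_1,\dots,x_m):=h_n(x_1,\dots,x_m)+\sum_{i=k}^{n-1}s^{\,n-i}h_i(x_1,\dots,x_m),$$
so that $g_0=h_n$. Since each $h_i$ is semi-homogeneous of weighted degree $i$, substituting $x_j\mapsto s^{-w_j}x_j$ scales $h_i$ by $s^{-i}$, and hence for $s\neq 0$ one computes $g_s(x_1,\dots,x_m)=s^n\,p(s^{-w_1}x_1,\dots,s^{-w_m}x_m)$. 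Because $(x_1,\dots,x_m)\mapsto(s^{-w_1}x_1,\dots,s^{-w_m}x_m)$ is a bijection of $M_2(K)^m$, this gives $\Image g_s=s^n\,\Image p$ for every $s\neq 0$; and as multiplication by the nonzero scalar $s^n$ is a linear automorphism of $M_2(K)\cong K^4$, hence a Zariski homeomorphism, $\Image p$ is dense if and only if $\Image g_s$ is dense.

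First I would assemble these into a single morphism $\Psi\colon M_2(K)^m\times K\to M_2(K)\times K$, $\Psi(x,s)=(g_s(x),s)$, and study the closure $Z=\overline{\Image\Psi}$ inside $M_2(K)\times K\cong K^5$. Since the source is irreducible, $Z$ is irreducible. The dominance of $h_n$ enters at $s=0$: the slice $\Image h_n\times\{0\}\subseteq\Image\Psi$ is dense in $M_2(K)\times\{0\}$, whence $M_2(K)\times\{0\}\subseteq Z$. Because $\Image\Psi$ contains points with $s\neq 0$, the irreducible $Z$ strictly contains the irreducible $4$-dimensional set $M_2(K)\times\{0\}$, forcing $\dim Z\ge 5$; together with $Z\subseteq K^5$ this gives $\dim Z=5$ and $Z=K^5$.

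The conclusion then follows by contradiction. Suppose $\Image p$ is not dense, so $V:=\overline{\Image p}$ is a proper closed (conjugation-invariant, by Remark~\ref{cong}) subvariety of $M_2(K)$, whence $\dim V\le 3$. Then for each $s\neq 0$ we have $\Image g_s=s^n\,\Image p\subseteq s^nV$, so
$$\Image\Psi\ \subseteq\ \{(s^n v,s):v\in V,\ s\neq 0\}\ \cup\ \bigl(M_2(K)\times\{0\}\bigr).$$
The first set is the image of the morphism $V\times(K\setminus\{0\})\to K^5$, $(v,s)\mapsto(s^nv,s)$, so its closure has dimension at most $\dim V+1\le 4$; the second piece has dimension $4$. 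Thus $Z=\overline{\Image\Psi}$ would have dimension at most $4$, contradicting $\dim Z=5$. Hence $\Image p$ is dense. (For the dimension-theoretic statements one may pass to the algebraic closure of $K$, which does not affect density of images of $K$-morphisms since $K$ is infinite.)

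I expect the main obstacle to be the temptation to argue fibrewise. One cannot deduce density of $\Image g_s$ for generic $s$ from density at $s=0$ by upper semicontinuity of fibre dimension, because that semicontinuity runs the wrong way: the dominant fibre sits over the \emph{special} point $s=0$, and in general the fibre of $Z$ over $s$ is strictly larger than $\overline{\Image g_s}$. The global dimension count above is designed precisely to sidestep this, transferring the information through the total space rather than fibre by fibre. A secondary virtue is that the argument is characteristic-free, which is why I prefer it to the alternative of exhibiting a point where the differential of $g_s$ is surjective: that route rests on generic smoothness of $h_n$ and so would work cleanly only when $\Char K=0$ (or $h_n$ is separable onto $M_2(K)$).
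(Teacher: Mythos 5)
Your proposal is correct, and its engine is the same one the paper uses: the weighted rescaling $x_j\mapsto s^{-w_j}x_j$ followed by multiplication by $s^n$, which realizes $p$ (up to a bijective change of variables and a nonzero scalar) as the member at parameter $s\neq 0$ of the family $g_s=h_n+\sum_{i<n}s^{n-i}h_i$ specializing to $h_n$ at $s=0$; your $g_s$ is exactly the paper's $\tilde P=\lambda^{-n}p(\lambda^{w_1}x_1,\dots,\lambda^{w_m}x_m)$ with $s=\varepsilon=1/\lambda$. Where you genuinely diverge is in how dominance is transported from the special fibre back to $p$. The paper encodes density as algebraic independence of the four entry polynomials and argues that a relation $h(p_{1,1},\dots,p_{2,2})=0$ would, upon setting $\varepsilon=0$, yield a relation among the entries of $h_n$; as written this is terse, since such a relation gives $h(\lambda^n\tilde P_{1,1},\dots,\lambda^n\tilde P_{2,2})=0$ rather than $h(\tilde P_{1,1},\dots,\tilde P_{2,2})=0$, and one must first pass to the top homogeneous component of $h$ (or homogenize and clear powers of $\varepsilon$) before specializing at $\varepsilon=0$. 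Your irreducibility-plus-dimension count on $Z=\overline{\Image\Psi}$ replaces that step entirely: density of $\Image h_n$ forces $\dim Z=5$, while non-density of $\Image p$ would confine $Z$ to a union of two sets of dimension at most $4$. This buys an argument that is characteristic-free, avoids the homogenization subtlety, and correctly sidesteps the fibrewise-semicontinuity trap you flag (the dominant fibre sits over the special point $s=0$, so no fibre-by-fibre deduction is available); the paper's route, once the missing homogenization is supplied, is more elementary in that it stays inside the polynomial ring and never invokes dimension theory. Both arguments are valid.
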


\begin{proof}  Consider
$$p(\lambda^w_1x_1,\dots,\lambda^w_mx_m)=\sum\limits_{i=k}^n
h_i\lambda^i.$$ One can write $\tilde
P=\lambda^{-n}p(\lambda^w_1x_1,\dots,\lambda^w_mx_m)$ as a
polynomial in $x_1,\dots,x_m$ and $\varepsilon=\frac{1}{\lambda}$.
The matrix polynomial is the set of four polynomials
$p_{1,1},p_{1,2},p_{2,1},p_{2,2},$ which we claim are independent.
If there is some polynomial $h$ in four variables such that
$h(p_{1,1},p_{1,2},p_{2,1},p_{2,2})=0$ then $h$ should vanish on
four polynomials of $\tilde P$ for each $\varepsilon$, in
particular for $\varepsilon=0,$ a contradiction.
\end{proof}

\begin{rem} The case remains open where
  $p(x_1,\dots,x_m)$ is a  polynomial for which there are no
weights $(w_1,\dots,w_m)$ such that one can write
$p=h_k+\dots+h_n,$ where $h_i$ is  semi-homogeneous of weighted
degree $i$ and $h_n$ has image dense in $M_2$.\end{rem}

\begin{examples}\label{nondense} For $\Char K\neq 2$ we give an example of such a polynomial
whose middle term has image dense in $M_2(K)$. Take the polynomial
$f(x,y)=[x,y]+[x,y]^2$. It is not hard to check that $\Image f$ is
the set of all matrices with eigenvalues $c^2+c$ and $c^2-c$.
Consider
$p(\alpha_1,\alpha_2,\beta_1,\beta_2)=f(\alpha_1+\beta_1^2,\alpha_2+\beta_2^2)$.
The polynomials $f$ and $p$ have the same images. Now let us open
the brackets. The  term of degree 4 is
$h_4=[\alpha_1,\alpha_2]^2+[\beta_1^2,\beta_2^2]$. The image of
$h_4$ is all of $M_2(K)$, because $[\alpha_1,\alpha_2]^2$ can be
any scalar matrix and $[\beta_1^2,\beta_2^2]$ can be any trace
zero matrix. However the image of $p$ is the set of all matrices
with eigenvalues $c^2+c$ and $c^2-c$.
\end{examples}

\subsection{Images of multilinear polynomials}
\begin{lemma}\label{2two}
If $A,B\in  \Image p$ have different   ratios of   eigenvalues,
then $\Image p$ contains matrices having arbitrary ratios of
eigenvalues $\frac{\lambda_1}{\lambda_2}\in K.$
\end{lemma}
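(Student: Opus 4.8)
The plan is to exploit the multilinearity of $p$ together with the algebraic invariant $\Pi$ of Remark~\ref{Phi} and the quadratic closure of $K$, imitating the interpolation argument in the proof of Theorem~\ref{imhom}. Write $A=p(x_1,\dots,x_m)$ and $B=p(y_1,\dots,y_m)$ and set
$$f(t)=p\bigl(tx_1+(1-t)y_1,\;\dots,\;tx_m+(1-t)y_m\bigr).$$
Since $p$ is multilinear, each entry of $f(t)$ is a polynomial in $t$ of degree at most $m=\deg p$, and $f(t)\in\Image p$ for every $t\in K$, with $f(1)=A$ and $f(0)=B$. The first step is the degree bookkeeping: $\tr f(t)$ has degree $\le m$ and $\det f(t)$ has degree $\le 2m$, so writing $\Pi\circ f=-2+\dfrac{(\tr f)^2}{\det f}=\dfrac{A(t)}{B(t)}$ in lowest terms gives $\deg A,\deg B\le 2\deg p$. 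Because $A$ and $B$ have different ratios of eigenvalues, $\Pi(f(1))\neq\Pi(f(0))$, so $\Pi\circ f$ is a genuinely nonconstant rational function.

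The second step realizes an arbitrary target ratio $r\in K\setminus\{0\}$. Put $c=r+r^{-1}$; as $r$ runs over $K^{\times}$ the value $c$ runs over all of $K$, since quadratic closure supplies a root of $\tau^2-c\tau+1$. To hit $c$, solve $A(t)-cB(t)=0$. This polynomial has degree $\le 2\deg p$, so by quadratic closure it has a root $t^*\in K$ whenever it is nonconstant, and since $A,B$ are coprime any such $t^*$ satisfies $B(t^*)\neq 0$, whence $\det f(t^*)\neq 0$. Thus $f(t^*)\in\Image p$ is a non-nilpotent matrix with $\Pi(f(t^*))=c$, i.e.\ with ratio of eigenvalues $r$. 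A routine check shows $A(t)-cB(t)$ fails to be nonconstant for at most one value of $c$ (only when $\deg A=\deg B$ and their leading coefficients are proportional, or the analogous drop when $\deg A<\deg B$), so this argument already produces a cofinite, hence Zariski dense, set of ratios.

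The main obstacle is promoting \emph{cofinitely many} ratios to \emph{every} ratio, i.e.\ removing the single possible exceptional value $c^\ast=\lim_{t\to\infty}\Pi(f(t))$. I expect to dispatch it by identifying $c^\ast$: the dominant term of $f(t)$ as $t\to\infty$ is $t^{m}\,p(x_1-y_1,\dots,x_m-y_m)$, so when $C:=p(x_1-y_1,\dots,x_m-y_m)$ is non-nilpotent one has $c^\ast=\Pi(C)$, and since $C\in\Image p$ this value is attained after all, leaving no gap. The delicate case is when $C$ is nilpotent, where the value at infinity is no longer $\Pi(C)$; here I would instead re-run the interpolation on a fresh pair of matrices of distinct ratios already produced in the second step, whose value at infinity differs from $c^\ast$, and combine the two passes to cover all of $K$. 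Arranging that the two exceptional values can always be made distinct (equivalently, that the leading difference can be taken non-nilpotent for a suitable choice of preimages) is the one point that requires genuine care rather than the routine interpolation estimates.
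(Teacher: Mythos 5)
Your argument is the density argument from the proof of Theorem~\ref{imhom} transplanted to the multilinear setting, and it does prove that cofinitely many values of the invariant $\frac{(\tr f)^2}{\det f}$ are attained; but the lemma claims that \emph{every} ratio is attained, and the two steps that would bridge this gap are respectively faulty and missing. First, the inference ``$B(t^*)\neq 0$, whence $\det f(t^*)\neq 0$'' is invalid: writing $(\tr f)^2=A(t)g(t)$ and $\det f=B(t)g(t)$, where $g$ is the common factor cancelled when passing to lowest terms, a root $t^*$ of $A(t)-cB(t)$ does satisfy $B(t^*)\neq 0$, but $\det f(t^*)=B(t^*)g(t^*)$ still vanishes whenever $g(t^*)=0$, and at such $t^*$ also $(\tr f(t^*))^2=A(t^*)g(t^*)=0$, so $f(t^*)$ is nilpotent and realizes no ratio at all. (Since $A(t)-c_1B(t)$ and $A(t)-c_2B(t)$ have disjoint root sets for $c_1\neq c_2$ by coprimality, this costs only finitely many further exceptional values of $c$, but your exceptional set is now of size up to $1+\deg g$, not $1$.) Second, and decisively, the removal of the exceptional values is exactly the point you defer: when $C=p(x_1-y_1,\dots,x_m-y_m)$ is nilpotent, you propose a second interpolation pass on a fresh pair and need its exceptional value to differ from the first, but you give no argument that this can be arranged, and nothing in your construction excludes that every admissible pair of preimages produces the same exceptional value. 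Since Lemma~\ref{thm1} genuinely needs every ratio (a merely dense set of ratios would not pin down $\Image p$ among the invariant cones), this is a real gap rather than a loose end.

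The idea you are missing is the one the paper's proof is built on: lift $x_1,\dots,x_m,y_1,\dots,y_m$ to \emph{generic} matrices, and instead of the one-parameter line use $f(T_1,\dots,T_m)=p(\tau_1x_1+t_1y_1,\dots,\tau_mx_m+t_my_m)$ with $T_i=(t_i,\tau_i)\in K^2$, which by multilinearity is linear in each $T_i$ separately. Choosing $i$ so that $\frac{(\tr f)^2}{\det f}$ is nonconstant in $T_i$ (possible precisely because the two given matrices have different ratios) and fixing the other $T_j$ generically, both $(\tr f)^2$ and $\det f$ are quadratic in $T_i$, so $(\tr f)^2-c\det f=0$ is solvable for \emph{every} $c\in K$ using only honest quadratic closure; and the degenerate possibility that the solution satisfies $\tr f=\det f=0$ is killed by Amitsur's theorem: at such a point $f$ would be a nilpotent element of the algebra of generic matrices with traces, which is a domain by Proposition~\ref{Am1}, hence $f$ would vanish there, forcing the two directions $p(\dots,x_i,\dots)$ and $p(\dots,y_i,\dots)$ to be proportional and the ratio to be constant in $T_i$ --- a contradiction. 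It is this domain argument, unavailable for your concrete matrices in $M_2(K)$, that eliminates the nilpotent degenerations and the exceptional values in one stroke; without it (or a substitute), your approach establishes Zariski density of the attained ratios, i.e.\ essentially the conclusion of Theorem~\ref{imhom}, but not Lemma~\ref{2two} as stated.
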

\begin{proof}
If $A = p(x_1,\dots,x_m), \ B = p(y_1,\dots,y_m)\in  \Image p$
have different ratios of   eigenvalues, then we can lift the
$x_1,\dots,x_m, y_1,\dots,y_m$ to generic matrices, and then
$p(x_1,\dots,x_m)=\tilde A$ and $p(y_1,\dots,y_m)=\tilde B$ also
have different ratios of eigenvalues. Then take
$$f(T_1,T_2,\dots,T_m)=p(\tau_1 x_1+t_1y_1,\dots,\tau_m x_m+t_my_m),$$
where $T_i=(t_i,\tau_i)\in K^2.$ The polynomial $f$ is linear with
respect to all $T_i$.

In view of Remark~\ref{Phi}, it is enough to show that the ratio
$\frac{(\tr f)^2}{\det f}$ takes on all values. Fix
$T_1,\dots,T_{i-1},T_{i+1},\dots,T_m$ to be generic pairs where
$i$ is such that $\frac{(\tr f)^2}{\det f}$ is not constant with
respect to $T_i$. Such $i$ exist because otherwise all matrices in
the image (in particular, $A$ and $B$) have the same ratio of
eigenvalues.
 But
$\frac{(\tr f)^2}{\det f}$ is the ratio of quadratic polynomials,
and $K$ is quadratically closed.

 If there is a point $T_i$ such
that $\tr f=\det f=0$, then   $f$ evaluated at this  $T_i$ is
nilpotent. Since $\tr f$ is a linear function,  the equation  $\tr
f = 0$ has only one root, which is a rational function on the
other parameters. Thus $f$  evaluated at this  $T_i$ is 0, by
Amitsur's Theorem. We conclude that the ratio of eigenvalues does
not depend on $T_i$, contrary to our assumption on $i$. Hence, we
can solve $\frac{(\tr f)^2}{\det f} =c$ for any $c \in K$.
\end{proof}

\begin{lemma}\label{2general_one}
If there exist $\lambda_1\neq\pm\lambda_2$ with a collection of
matrices $(E_1,E_2,\dots,E_m)$ such that $p(E_1,E_2,\dots,E_m)$
has eigenvalues $\lambda_1$ and $\lambda_2$, then all
diagonalizable matrices lie in $\Image p$.
\end{lemma}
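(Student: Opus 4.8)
The plan is to show that $\Image p$ realizes non-scalar diagonalizable matrices of every ratio of eigenvalues, and then, separately, that it contains the scalars. Since $\lambda_1\neq\lambda_2$, the given value $A=p(E_1,\dots,E_m)$ has distinct eigenvalues, hence is non-scalar and diagonalizable. Recall from Remark~\ref{Phi2} that, for each fixed ratio, the non-scalar diagonalizable matrices with that ratio form an irreducible invariant cone; so by Lemma~\ref{cone_conj} it is enough to exhibit a single matrix of each ratio $v\in K$ (together with the scalars) inside $\Image p$. The engine for producing all ratios is Lemma~\ref{2two}, whose hypothesis is that $\Image p$ contains two values with different ratios, so the first task is to manufacture a second ratio out of the single one we are handed.

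To do this I would pass to generic matrices, setting $P=p(\xi_1,\dots,\xi_m)$ and examining the rational invariant $\Pi(P)=-2+\frac{(\tr P)^2}{\det P}$ of Remark~\ref{Phi}. Either $\Pi(P)$ is non-constant, or it is identically some $c\in K$. If it is non-constant, two specializations give two honest matrices in $\Image p$ with distinct values of $\Pi$, i.e. with different ratios, and Lemma~\ref{2two} immediately yields matrices of every ratio $v\in K$. The key point is that the constant case cannot occur under our hypothesis: specializing $\xi\mapsto E$ forces $c=\Pi(A)=\frac{\lambda_1}{\lambda_2}+\frac{\lambda_2}{\lambda_1}$, which is $\neq\pm2$ precisely because $\lambda_1\neq\pm\lambda_2$ (the degenerate case $\det A=0$, one eigenvalue vanishing, is handled directly, as $A$ is then already a non-scalar diagonalizable value of a ratio different from, say, that of a trace-zero value). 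Then $(\tr P)^2=(c+2)\det P$, so the discriminant equals $(\tr P)^2-4\det P=\frac{c-2}{c+2}(\tr P)^2$; since $K$ is quadratically closed the scalar $\frac{c-2}{c+2}$ is a square, the discriminant is a perfect square, and both eigenvalues become $K$-linear in $\tr P$, hence lie in the algebra of generic matrices with traces. This is exactly the situation ruled out in the proof of Theorem~\ref{imhom}: $P-\lambda_1 I$ and $P-\lambda_2 I$ are then two nonzero elements of the domain $\mathrm{UD}$ of Proposition~\ref{Am1} whose product is $0$ by Cayley--Hamilton, a contradiction.

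With all ratios $v\in K$ in hand, every $v\neq1$ gives a non-scalar diagonalizable matrix of ratio $v$ in $\Image p$, and Lemma~\ref{cone_conj} together with Remark~\ref{Phi2} promotes it to the whole irreducible cone of such matrices. Letting $v$ range over $K\setminus\{1\}$ shows that all non-scalar diagonalizable matrices lie in $\Image p$.

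It remains to capture the scalar matrices, and this I expect to be the main obstacle, since the scalars are exactly the singular locus of the discriminant hypersurface and so are not forced merely by $\Image p$ containing the dense open set of matrices with distinct eigenvalues. When $\sum_{\sigma}c_\sigma\neq0$ there is a clean shortcut: substituting scalar inputs gives $p(\gamma_1 I,\dots,\gamma_m I)=\bigl(\sum_\sigma c_\sigma\bigr)\gamma_1\cdots\gamma_m\,I$, so every scalar is realized and the cone property of Lemma~\ref{cone_conj} finishes the case. The genuinely delicate case is $\sum_\sigma c_\sigma=0$ (for instance $p=[x_1,x_2]x_3$), where scalar inputs give $0$ and a scalar value must be produced from non-diagonal arguments. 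Here I would work on the double-eigenvalue locus: solving $\Pi(f)=2$ in the linear family $f$ of Lemma~\ref{2two} produces a value $\mu I+N$ with $\mu\neq0$ and $N$ nilpotent, and the crux is to show that $N$ can be forced to vanish, equivalently that not every value with a repeated nonzero eigenvalue is non-diagonalizable. I would attack this by a dimension count on the locus $\{(\tr p)^2=4\det p\}$ in $M_2(K)^m$, or by an explicit specialization mimicking the factorization $H\cdot H=I$ for a trace-zero value $H$ (as in the example $[x_1,x_2]x_3$); I expect this final step to demand the most care.
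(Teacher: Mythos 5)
Your first stage --- producing matrices of every eigenvalue ratio --- is a legitimate alternative to the paper's argument and is essentially sound. Where the paper first reduces (via Lemma~\ref{graph}) to an evaluation of $p$ on matrix units equal to $\diag\{\lambda_1,\lambda_2\}$ and then symmetrizes with $\chi(e_{ij})=e_{3-i,3-j}$, you instead pass to generic matrices and argue that the invariant $\Pi$ of Remark~\ref{Phi} cannot be constant: if it were, the hypothesis forces the constant to differ from $\pm 2$, the discriminant becomes a perfect square, the eigenvalues land in the trace ring, and $(P-\lambda_1I)(P-\lambda_2I)=0$ contradicts Proposition~\ref{Am1}. This is the same zero-divisor argument the paper uses in Theorem~\ref{imhom}, and combined with Lemma~\ref{2two} and Lemma~\ref{cone_conj} it does deliver all non-scalar diagonalizable matrices. (Your parenthetical treatment of the degenerate case $\det A=0$ is not right as stated --- it invokes a trace-zero value you have not produced --- but that case is vacuous anyway: $\det P\equiv 0$ would give $P(P-\tr(P)I)=0$ in the domain of Proposition~\ref{Am1}, forcing $p$ to be a PI or central, contrary to hypothesis.)

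The genuine gap is exactly where you predicted it: the scalar matrices when $\sum_\sigma c_\sigma=0$. Knowing that $\Image p$ contains matrices of every eigenvalue ratio, including ratio $1$, only tells you that some value has equal nonzero eigenvalues, i.e.\ lies in $K\cup\tilde K$; nothing so far rules out the possibility that $\Image p$ misses the nonzero scalars entirely (the set of matrices with distinct eigenvalues together with the nilpotent matrices and $0$ is itself an invariant cone), and neither the proposed dimension count on the discriminant locus nor the unspecified explicit specialization is carried out. This missing step is precisely what the paper's $\chi$-construction buys: replacing each matrix unit $e_{ij}$ by $\tau e_{ij}+t\,e_{3-i,3-j}$ keeps every one of the $2^m$ expanded terms diagonal (the parity count of Lemma~\ref{graph} is preserved), so the resulting multilinear family $f(T_1,\dots,T_m)$ takes only diagonal values; an affine line of diagonal matrices with non-constant eigenvalue ratio necessarily passes through a nonzero scalar matrix, and Lemma~\ref{cone_conj} then yields all diagonal, hence all diagonalizable, matrices at once --- scalars included. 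Without some such device confining the deformation to the diagonal (or an independent proof that a nonzero scalar value exists), your argument establishes only that $\Image p$ contains every diagonalizable matrix with distinct eigenvalues, which is weaker than the lemma and insufficient for its use in Lemma~\ref{thm1}.
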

\begin{proof}
Applying Lemma~\ref{graph} to the hypothesis, there is a matrix
$$\left(\begin{array}{cc}\lambda_1 &
0 \\0 & \lambda_2\end{array}\right)\in\Image p,\
\lambda_1\neq\pm\lambda_2$$ which is an evaluation of $p$ on
matrix units $e_{ij}.$ Consider the following mapping $\chi$
acting on the indices of the matrix units:
$\chi(e_{ij})=e_{3-i,3-j}$. Now take the polynomial
$$f(T_1,T_2,\dots,T_m)=p(\tau_1 x_1+t_1\chi(x_1),\dots,\tau_m x_m+t_m\chi(x_m)),$$
where $T_i=(t_i,\tau_i)\in K^2,$ which is linear with respect to
each $T_i$. Let us open the brackets. We obtain $2^m$ terms and
for each of them the degrees of all vertices stay even. (The edge
$12$ becomes $21$ which does not change degrees, and the edge $11$
becomes $22$, which decreases the degree of the vertex $1$ by two
and increases the degree of the vertex $2$ by two.) Thus all terms
remain diagonal. Consider generic pairs $T_1,\dots,T_m\in K^2.$
For each $i$ consider the polynomial $\tilde
f_i(T_i^*)=f(T_1,\dots,T_{i-1},T_i+T_i^*,T_{i+1},\dots,T_m)$. For
at least one $i$ the ratio of eigenvalues of $\tilde f_i$ must be
different from $\pm 1$. (Otherwise the ratio of eigenvalues  of
$\tilde f_i$ equal $\pm 1$ all $i$, implying $\lambda_1= \pm
\lambda_2\}$, a contradiction.)

  Fix $i$ such that the ratio of eigenvalues of $\tilde f_i$ is not $\pm 1$.
  By linearity, $\Image(\tilde f_i)$ takes on values with all possible ratios of
  eigenvalues; hence,
the cone under $\Image(\tilde f_i)$ is the set of all diagonal
matrices. Therefore by Lemma \ref{cone_conj} all diagonalizable
matrices lie in the image of $p$.
\end{proof}

\begin{lemma}
\label{thm1} If $p$ is a multilinear polynomial evaluated on the
matrix ring $M_2(K)$, then $\Image p$ is either $\{0\}$, $K$,
$sl_2$,  $M_2(K)$, or $M_2(K)\setminus\tilde K.$
\end{lemma}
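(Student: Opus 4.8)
The plan is to analyze $\Image p$ according to the possible ratios of eigenvalues that occur among its values, exploiting the fact that for multilinear $p$ the image is an invariant cone (Remark after Lemma~\ref{cone_conj}) with no root-closure hypothesis needed. First I would dispose of the degenerate cases: if $\Image p = \{0\}$ or $\Image p$ consists only of scalar matrices we are done, so by Lemma~\ref{scalar} we may assume $\Image p$ contains a nondiagonal matrix, and then by Lemma~\ref{linear} every off-diagonal matrix unit $e_{ij}$ lies in $\Image p$ and the linear span $L$ is either $sl_2$ or $M_2(K)$. The substantive work is to pin down which invariant cones actually occur inside this span.

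The heart of the argument splits on whether $\Image p$ contains a matrix whose eigenvalue ratio is different from $\pm 1$. If it does, then combined with the nilpotent/unipotent elements forced by Lemma~\ref{linear} we have two values of $p$ with different ratios of eigenvalues, so Lemma~\ref{2two} applies and $\Image p$ realizes \emph{arbitrary} ratios $\lambda_1/\lambda_2 \in K$; in particular there is a value with $\lambda_1 \neq \pm\lambda_2$, so by Lemma~\ref{2general_one} all diagonalizable matrices lie in $\Image p$. Since $\Image p$ already contains the nilpotents and, via Lemma~\ref{2two} applied again, unipotent-type elements of $\tilde K$, one checks that $\Image p$ sweeps out all of $M_2(K)$ (or, if $\tr p \equiv 0$, all of $sl_2$). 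The remaining regime is that \emph{every} value of $p$ has eigenvalue ratio $\pm 1$, i.e. every value is either nilpotent, scalar, a $\pm1$-ratio diagonalizable matrix, or lies in $\tilde K$ or $\hat K$; here the domain property of the generic matrix algebra (Proposition~\ref{Am1}) is the key lever, exactly as in the proof of Theorem~\ref{imhom}, to rule out mixed behavior and force $\Image p$ into one of the listed cones.

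Concretely, in the ratio-$\pm 1$ case I would argue: if the common ratio is $+1$ then in characteristic $\neq 2$ Lemma~\ref{nilp} forces $p$ to be a PI (so $\Image p = \{0\}$, already handled) or central, while in characteristic $2$ one lands in $sl_2$ or $\hat K$ by Example~\ref{coneex1}(v); if the ratio is $-1$ (so $\Char K \neq 2$) then every value satisfies $\lambda_1 = -\lambda_2$, hence $\tr p \equiv 0$ and $\Image p \subseteq sl_2$, and by Example~\ref{coneex1}(v) the image is either $\hat K$ or all of $sl_2$. Assembling these, the only possible images are $\{0\}$, $K$, $sl_2$, $M_2(K)$, and the exceptional $M_2(K)\setminus\tilde K$, which arises precisely when $\Image p$ hits all non-$\pm1$ ratios and all nilpotents and scalars but misses the cone $\tilde K$.

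The main obstacle I anticipate is the bookkeeping at the boundary between the two regimes: showing that once all diagonalizable matrices and all nilpotents are present, the \emph{only} cone that can still be missing is $\tilde K$, and that it is genuinely possible for $\tilde K$ to be excluded while everything else is included (this is where the $M_2(K)\setminus\tilde K$ alternative comes from, as in Example~\ref{coneex2}(iv)). Controlling the $\tilde K$ cone requires a separate argument — one cannot produce its elements by the interpolation trick of Lemma~\ref{2two}, since $\tilde K$ is the degenerate locus where $\tr^2 f = 4\det f$ but $f$ is non-nilpotent — so the delicate point is to keep track of whether the discriminant locus of $p$ meets the non-nilpotent equal-eigenvalue stratum, which need not happen and accounts for the genuinely new fifth alternative in the statement.
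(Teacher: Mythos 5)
Your overall strategy (reduce via Lemmas \ref{scalar} and \ref{linear}, then classify by the ratios of eigenvalues occurring in $\Image p$, using Lemmas \ref{2two}, \ref{2general_one} and the domain property from Proposition~\ref{Am1}) is broadly the paper's, but there is a genuine logical error at the central step. In your first regime you assert that, once all diagonalizable and all nilpotent matrices are in $\Image p$, ``via Lemma~\ref{2two} applied again'' one also obtains ``unipotent-type elements of $\tilde K$,'' so that $\Image p$ sweeps out all of $M_2(K)$. This is false, and you contradict it yourself two paragraphs later: Lemma~\ref{2two} only controls the invariant $\tr^2/\det$, which equals $4$ on scalars and on $\tilde K$ alike, so it cannot produce an element of $\tilde K$. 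If the claim were true, the alternative $M_2(K)\setminus\tilde K$ could never occur and Lemmas~\ref{char2} and \ref{char_n_2} would be superfluous. Having then recognized the problem, you leave it as an unresolved ``obstacle requiring a separate argument,'' so the proof is not closed. At the level of this lemma the resolution is actually immediate and needs no delicacy: $\tilde K$ is an irreducible invariant cone (Example~\ref{coneex1}(iii)) and $\Image p$ is an invariant cone, so either $\tilde K\cap\Image p=\emptyset$, giving $\Image p=M_2(K)\setminus\tilde K$, or $\tilde K\subseteq\Image p$, giving $M_2(K)$; the whole point of the fifth alternative is that this lemma does \emph{not} decide between these, deferring that to the later lemmas.

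Two smaller gaps. First, in your regime A you invoke Lemma~\ref{2two} by pairing a ratio-$\ne\pm1$ value with ``the nilpotent/unipotent elements forced by Lemma~\ref{linear}''; but nilpotent matrices have no ratio of eigenvalues (Remark~\ref{Phi}), so this pair does not satisfy the hypothesis of Lemma~\ref{2two}. You should instead feed the single ratio-$\ne\pm1$ value directly into Lemma~\ref{2general_one}, as the paper does, and reserve Lemma~\ref{2two} for the sub-case where the ratios $+1$ and $-1$ both occur but nothing else does --- a mixed sub-case that your dichotomy (``a ratio $\ne\pm1$ occurs'' versus ``there is a common ratio $\pm1$'') silently omits, and which the paper treats explicitly. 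Second, in the ratio-$(-1)$ case you conclude ``either $\hat K$ or all of $sl_2$'' and then drop $\hat K$ from the final list without justification; the justification is that Lemma~\ref{linear} already put $e_{12}$, hence all nilpotents, into $\Image p$, and $\hat K$ contains no nilpotents.
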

\begin{proof} In view of Lemma~\ref{cone_conj}, we are done unless $\Image p$ contains
a non-scalar matrix. By Lemma \ref{linear} the linear span of
$\Image p$ is  $sl_2$  or $M_2(K)$. We treat the characteristic 2
and characteristic $\ne 2$ cases separately.

CASE I: $\Char K=2$. Consider the set
$$\Theta= \{ p(E_1,\dots,E_m)  \quad \text{where the } E_j \text{ are matrix units}\}.$$
 If
the linear span of the image is not $sl_2$, then $\Theta$ contains
 at least one non-scalar diagonal matrix
$\diag\{\lambda_1,\lambda_2\}$, so $\lambda_1\neq -\lambda_2$
(since $+1 = -1$). Hence~by Lemma \ref{2general_one}, all
diagonalizable matrices belong to $\Image p$. Thus, $\Image p$
contains $M_2(K)\setminus\tilde K$.

If the linear span of the image of $p$ is $sl_2$, then by Lemma
~\ref{graph} the identity matrix (and thus all scalar matrices)
and $e_{12}$ (and thus all nilpotent matrices) belong to the
image. On the other hand, in characteristic 2, any matrix $sl_2$
is conjugate to a matrix of the form $\lambda _1 I + \lambda_2
e_{1,2},$ and we consider the invariant $\frac
{\lambda_2}{\lambda_1}.$ Take $x_1,\dots,x_m$ to be generic
matrices. If $p(x_1,\dots,x_m)$ were nilpotent then $\Image p$
would consist only of nilpotent matrices, which is impossible. By
Example~\ref{coneex1}(v), $p(x_1,\dots,x_m)$ is not scalar and not
nilpotent, and thus is a matrix from $\tilde K$. Hence, $\tilde K
\subset \Image p,$ by~ Lemma~\ref{linear1}. Thus, all trace zero
matrices belong to $\Image p.$

CASE II: $\Char K\neq 2$. Again assume that the image is not
$\{0\}$ or the set of scalar matrices. Then by Lemma \ref{linear}
we obtain that $e_{12}\in\Image p$. Thus all nilpotent matrices
lie in $\Image p$. If the image consists only of matrices of trace
zero, then by Lemma \ref{linear} there is at least one matrix in
the image with a nonzero   diagonal entry. By Lemma~\ref{graph}
there is a set of matrix units that maps to a nonzero diagonal
matrix which, by assumption, is of trace zero and thus is
$\left(\begin{array}{cc}c & 0
\\0 & -c\end{array}\right).$ By Lemma \ref{cone_conj} and Example~\ref{coneex1}, $\Image p$ contains
 all  trace zero $2\times 2$ matrices.

Assume that the image
contains a matrix with nonzero trace. Then by Lemma~\ref{linear}
the linear span of the image is $M_2(K)$, and together with Lemma
\ref{graph} we have at least two diagonal linearly independent
matrices in the image. Either these matrices have ratios of
eigenvalues $(\lambda_1:\lambda_2)$ and $(\lambda_2:\lambda_1)$
for $\lambda_1\neq\pm \lambda_2$ or these matrices have
non-equivalent ratios. In the first case we can use Lemma
\ref{2general_one} which says that all diagonalizable matrices lie
in the image. If at least one of these matrices have ratio not
equal to $\pm 1$, then in the second case we also use Lemma
\ref{2general_one} and obtain that all diagonalizable matrices lie
in the image. If these matrices are such that the ratios of their
eigenvalues are respectively $1$ and $-1$, then we use Lemma
\ref{2two} and obtain that all diagonalizable matrices with
distinct eigenvalues lie in the image. By assumption, in this
case, scalar matrices also belong to the image. Therefore we
obtain that for any ratio $(\lambda_1:\lambda_2)$ there is a
matrix $A\in\Image p$ having such a ratio of eigenvalues. Using
Lemmas~\ref{cone_conj} and \ref{linear1}, we obtain that the image
of $p$ can be either $\{0\}$, $K$,  $sl_2$, $M_2(K)$, or
$M_2(K)\setminus\tilde K.$
\end{proof}
\begin{lemma}
\label{char2} If $p$ is a multilinear polynomial evaluated on the
matrix ring $M_2(K)$, where $K$ is a quadratically closed field of
characteristic $2$, then $\Image p$ is either $\{0\}$,  $K$,
$sl_2$, or $M_2(K)$.
\end{lemma}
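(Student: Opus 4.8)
The plan is to invoke Lemma~\ref{thm1}, which already restricts $\Image p$ to one of $\{0\}$, $K$, $sl_2$, $M_2(K)$, or $M_2(K)\setminus\tilde K$; so the whole problem reduces to excluding the last possibility when $\Char K=2$. First I would record the characteristic~$2$ geometry. Since the discriminant of a $2\times2$ matrix $A$ is $(\tr A)^2-4\det A=(\tr A)^2$, a matrix has equal eigenvalues precisely when $\tr A=0$; hence $sl_2$ is the disjoint union of the scalar matrices, the nonzero nilpotents, and $\tilde K$ (the non-scalar matrices with equal nonzero eigenvalues). Consequently, assuming $\Image p=M_2(K)\setminus\tilde K$ forces $\Image p\cap sl_2$ to be exactly the scalar matrices together with the nilpotents, while $\Image p$ still contains matrices of nonzero trace (diagonalizable with distinct eigenvalues). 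The objective is then to exhibit a single value of $p$ lying in $\tilde K$: by Lemma~\ref{linear1} (applied with weights $(1,0,\dots,0)$, so that no root-closure hypothesis is needed) one such value already yields $\tilde K\subseteq\Image p$, whence $\Image p=M_2(K)$ and the sought contradiction.

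To produce that value I would study the trace-zero hypersurface $\mathcal H=\{a:\tr p(a)=0\}$ (the discriminant surface in characteristic~$2$), which is nonempty and proper since $\tr p$ is not identically zero ($\Image p\not\subseteq sl_2$). Consider on $\mathcal H$ the two closed subsets where the value is scalar and where it is nilpotent; note that, because $\tr p=0$ already forces the diagonal entries of $p$ to coincide, the scalar locus is cut out simply by the vanishing of the two off-diagonal entries of $p$, and the nilpotent locus by $\det p=0$. The hypothesis that no value lies in $\tilde K$ says exactly that these two closed sets cover $\mathcal H$. If $\mathcal H$ is irreducible, it must therefore equal one of them. If $\mathcal H$ equals the nilpotent locus, then $\det p\equiv0$ on $\mathcal H$, contradicting the nonzero scalar values $cI\in\Image p$ (for which $\det=c^2\neq0$ while $\tr=2c=0$, so they lie on $\mathcal H$). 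If instead $\mathcal H$ equals the scalar locus, then both off-diagonal entries of $p$ vanish on $\mathcal H$, contradicting the nilpotent value $e_{12}\in\Image p$, which lies on $\mathcal H$ yet has off-diagonal entry $1$. Either way the covering fails, so some value of $p$ on $\mathcal H$ is non-scalar with nonzero determinant, i.e.\ lies in $\tilde K$, completing the contradiction as above.

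The step I expect to be the crux is precisely the irreducibility of $\mathcal H=\{\tr p=0\}$, for only then does the covering argument close. This is where multilinearity is essential: by the same Amitsur-type reasoning used in the proof of Theorem~\ref{imhom}, a nontrivial factorization of $\tr p$ would let one localize $\det p$ on one factor-hypersurface and the off-diagonal part on the other, manufacturing a generic zero divisor in the division algebra $UD$ of Proposition~\ref{Am1}, which is impossible. I would therefore prove (or cite, in the form of that remark) that for a multilinear $p$ the polynomial $\tr p$ is irreducible, so that $\mathcal H$ is an irreducible variety. It is exactly this irreducibility that fails for the merely homogeneous polynomial of Example~\ref{coneex2}(iv): there the discriminant surface genuinely splits into a scalar component and a nilpotent component, which is why that example realizes $M_2(K)\setminus\tilde K$ and why the restriction to \emph{multilinear} $p$ in the statement cannot be dropped.
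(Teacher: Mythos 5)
Your reduction via Lemma~\ref{thm1} to excluding $\Image p=M_2(K)\setminus\tilde K$, and your observation that in characteristic $2$ the discriminant degenerates to $(\tr p)^2$ so that the trace-zero locus is exactly the locus of equal eigenvalues, are both correct; and the covering argument on $\mathcal{H}=\{\tr p=0\}$ (scalar locus cut out by the off-diagonal entries, nilpotent locus by $\det p$, with the values $cI$ and $e_{12}$ witnessing that neither closed set can be all of $\mathcal{H}$) would finish the proof \emph{provided} $\mathcal{H}$ is irreducible. That is precisely where the argument is not complete, and the justification you offer does not close it. The remark following Example~\ref{coneex2} concerns reducibility of the discriminant $\tr(p)^2-4\det(p)$; in characteristic $2$ this polynomial is identically the square $(\tr p)^2$, hence always reducible, so the remark says nothing about whether $\tr p$ itself is irreducible. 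Moreover, the ``generic zero divisor in UD'' mechanism of Proposition~\ref{Am1} applies to elements of the algebra of generic matrices with traces, whereas a hypothetical factorization $\tr p=f\cdot g$ is a factorization of a \emph{commutative} polynomial in the $4m$ matrix entries whose factors need not be trace polynomials or elements of UD at all, so no zero divisor is produced. Since $\tr p$ is multihomogeneous of multidegree $(1,\dots,1)$, a nontrivial factorization would have to separate the variables into disjoint blocks, say $\tr p=f(x_1,\dots,x_k)\,g(x_{k+1},\dots,x_m)$; ruling this out is a genuine claim about multilinear $p$ (one must exclude, e.g., $\tr p=\tr(x_1)\cdot c(x_2,\dots,x_m)$ with $c$ central), and nothing in your proposal establishes it. Until it is established, the scenario in which $\mathcal{H}$ splits into a component where $p$ is scalar and a component where $p$ is nilpotent --- exactly the configuration realized by the homogeneous Example~\ref{coneex2}(iv) that you yourself cite --- remains open.

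For comparison, the paper's proof sidesteps this entirely by exploiting multilinearity algebraically rather than geometrically: with $b_i=p(x_1,\dots,y_i,\dots,x_m)$, the element $p_i=p\,\tr(b_i)+\tr(p)\,b_i$ equals $p$ evaluated with $x_i\tr(b_i)+y_i\tr(p)$ in the $i$-th slot, hence $\Image p_i\subseteq\Image p$, while $\tr(p_i)=2\tr(p)\tr(b_i)=0$; so $p_i$ is scalar or nilpotent, the domain property of Proposition~\ref{Am1} rules out the nilpotent alternative, and $p_i$ being scalar for every $i$ traps all values of $p$ in the plane $\langle p,I\rangle$, contradicting Lemma~\ref{linear}. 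Your route may well be salvageable by proving irreducibility of $\tr p$ for multilinear non-identities, but as written the argument has a gap at its central step.
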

\begin{proof}
In view of Lemma \ref{thm1}, it suffices to assume that the image
of $p$ is $M_2(K)\setminus\tilde K$. Let
$x_1,\dots,x_m,y_1,\dots,y_m$ be generic matrices. Consider the
polynomials
$$b_i=p(x_1,\dots,x_{i-1},y_i,x_{i+1},\dots,x_m).$$ Let
$p_i(x_1,\dots,x_m,y_i)=p\tr(b_i)+\tr(p)b_i$. Hence $p_i$ can be
written as
$$p_i=p(x_1,\dots,x_{i-1},x_i\tr(b_i)+y_i\tr(p),x_{i+1},\dots,x_m).$$
Therefore $\Image p_i\subseteq\Image p.$ Also if $a\in\Image p_i$,
then
$$\tr(a)=\tr(p\,\tr(b_i)+\tr(p)\,b_i)=2\,\tr(p)\,\tr(b_i)=0.$$
Thus, $\Image p_i$ consists only of trace-zero matrices which
belong to the image of $p$. Excluding $\tilde K$, the only trace
zero matrices are nilpotent or scalar.  Thus, for each~$i,$
$p_i(x_1,\dots,x_m,y_i)$ is either scalar or nilpotent. However,
the $p_i$ are the elements of the algebra of free matrices with
traces, which is a domain.  Thus, $p_i(x_1,\dots,x_m,y_i)$ cannot
be nilpotent. Hence for all $i=1,\ldots, m,$
$p_i(x_1,\dots,x_m,y_i)$ is scalar. In this case, changing
variables leaves the plane $\langle p,I\rangle$ invariant.
Therefore, $\dim (\Image p) =  2,$ a contradiction.
\end{proof}
\begin{lemma}
\label{char_n_2} If $p$ is a multilinear polynomial evaluated on
the matrix ring $M_2(K)$ (where $K$ is a quadratically closed
field of characteristic not $2$), then $\Image p$ is either
$\{0\}$,   $K$, $sl_2$, or $M_2(K)$.
\end{lemma}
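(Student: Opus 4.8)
The plan is to invoke Lemma~\ref{thm1}, which already confines $\Image p$ to one of $\{0\}$, $K$, $sl_2$, $M_2(K)$, or $M_2(K)\setminus\tilde K$; so the entire task is to rule out the last possibility when $\Char K\neq 2$. Assume for contradiction that $\Image p=M_2(K)\setminus\tilde K$. Since $\Char K\neq 2$ I would pass to the traceless part: taking generic matrices $x_1,\dots,x_m$, writing $P=p(x_1,\dots,x_m)$ in the algebra of generic matrices with traces, and setting $\hat P=P-\tfrac12\tr(P)\,I$. By Cayley--Hamilton, $\hat P^2=\tfrac14\bigl((\tr P)^2-4\det P\bigr)I=\tfrac14\,\Delta\cdot I$, where $\Delta=(\tr P)^2-4\det P$ is a central trace polynomial. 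Because $\Image p$ contains matrices with distinct eigenvalues and nonzero trace, the generic value $P$ is neither scalar nor of vanishing discriminant, so $\hat P\neq 0$ and $\Delta\neq 0$ in the domain of Proposition~\ref{Am1}.

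The observation driving the argument is that a value $P\in\tilde K$ is \emph{exactly} one with $\Delta=0$ (repeated eigenvalue), $\hat P\neq 0$ (non-diagonalizable), and $\tr P\neq 0$ (the repeated eigenvalue $\tfrac12\tr P$ being nonzero). Thus to contradict $\Image p=M_2(K)\setminus\tilde K$ it suffices to exhibit a specialization on the discriminant locus $V(\Delta)$ at which both $\hat P\neq 0$ and $\tr P\neq 0$. The single structural fact I would establish first is that, for a \emph{multilinear} $p$, the discriminant $\Delta$ is irreducible (in particular reduced). This is the characteristic $\neq 2$ incarnation of the zero-divisor principle recorded in the Remark following Example~\ref{coneex2}: via $\hat P^2=\tfrac14\Delta I$, a nontrivial square or factorization of $\Delta$ would manufacture a zero divisor of the shape $\hat P\pm(\text{scalar})$ in the division algebra UD of Proposition~\ref{Am1}, which is impossible. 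Granting this, $V(\Delta)$ is irreducible of codimension $1$, and a trace polynomial vanishes on it iff it is divisible by $\Delta$.

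Now I would rule out both bad containments. From $\hat P^2=\tfrac14\Delta I$ together with the fact that the polynomial ring $R$ of the entries of the generic matrices is a domain, $\Delta$ cannot divide every entry of $\hat P$: otherwise $\hat P=\Delta M$ for some $M$ over $R$, and cancelling one factor of $\Delta$ in $\Delta^2M^2=\tfrac14\Delta I$ gives $\Delta\, M^2=\tfrac14 I$, forcing the nonconstant $\Delta$ to be a unit. Hence some entry of $\hat P$ does not vanish on $V(\Delta)$, so $\hat P\not\equiv 0$ there. For the trace, note that the nonzero scalar matrices lie in $\Image p=M_2(K)\setminus\tilde K$ and satisfy $\Delta=0$, $\tr P\neq 0$; thus $\tr P$ does not vanish identically on $V(\Delta)$ either.

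Combining these with the irreducibility of $V(\Delta)$, the generic point of this surface lies off both $\{\hat P=0\}$ and $\{\tr P=0\}$; there $\hat P^2=\tfrac14\Delta I=0$ makes $\hat P$ a nonzero nilpotent, while $\tfrac12\tr(P)I$ is a nonzero scalar, so $P=\tfrac12\tr(P)I+\hat P\in\tilde K$. Since $K$ is infinite and quadratically closed, this nonempty open condition on the irreducible locus is realized at an honest $K$-point, producing an element of $\tilde K\cap\Image p$ and contradicting $\Image p=M_2(K)\setminus\tilde K$. Hence $\Image p$ is one of $\{0\}$, $K$, $sl_2$, $M_2(K)$. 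I expect the real work to be the irreducibility (reducedness) of the discriminant of a multilinear $p$ — the translation of ``$\Image p$ omits $\tilde K$'' into a forbidden factorization of $\Delta$, i.e.\ a zero divisor in UD. This is precisely where multilinearity (hence Proposition~\ref{Am1}) is indispensable, and it plays the role that the collapse $\tr p_i=2\tr(p)\tr(b_i)=0$ plays in the characteristic $2$ argument of Lemma~\ref{char2}.
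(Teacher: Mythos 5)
Your reduction is correct as far as it goes: by Lemma~\ref{thm1} everything rests on excluding $\Image p=M_2(K)\setminus\tilde K$, and you have correctly identified that this exclusion is equivalent to finding a specialization on $V(\Delta)$ where simultaneously $\hat P\neq 0$ and $\tr P\neq 0$. But the step that carries all the weight --- the irreducibility (or even reducedness) of $\Delta$ --- is not established by the argument you sketch, and this is a genuine gap rather than a detail. The identity $\hat P^2=\tfrac14\Delta I$ manufactures a zero divisor in UD only when $\Delta$ is a perfect square of a polynomial: then $\sqrt\Delta$ is a conjugation-invariant polynomial, hence (by Donkin's theorem, which you never invoke but need) a trace polynomial, hence central in the trace ring, and $\hat P\pm\tfrac12\sqrt\Delta\,I$ are genuine elements of the domain with zero product. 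If instead $\Delta=fg$ with $f,g$ coprime non-associate factors, no zero divisor appears: in the quadratic extension $Z(\hat P)$ of the center one simply has $\hat P=\tfrac12\sqrt{fg}$, which is consistent. So your ``zero-divisor principle'' rules out $\Delta$ being a square, not $\Delta$ being reducible, and it is precisely reducibility that would defeat your final step (the two nonempty open conditions $\hat P\neq 0$ and $\tr P\neq 0$ could live on different components of $V(\Delta)$ --- indeed, under the hypothesis $\Image p=M_2(K)\setminus\tilde K$ they \emph{do}: the scalar locus and the nilpotent locus cover $V(\Delta)$ and each kills one of your two conditions). Assuming irreducibility here is very close to assuming what is to be proved. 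A smaller but related flaw: your argument that $\Delta$ cannot divide every entry of $\hat P$ silently assumes $\Delta$ reduced, since ``vanishes on $V(\Delta)$'' only gives divisibility by the radical of $\Delta$.

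The paper's proof goes the other way around: it does \emph{not} prove $\Delta$ irreducible. Instead, assuming $\Image p=M_2(K)\setminus\tilde K$, it restricts $p$ to the lines $A+tB$ obtained by perturbing one variable at a time, shows by a case analysis (two nilpotent roots, one scalar and one nilpotent root, two scalar roots, or degree $\le 1$ in $t$) that the discriminant restricted to each such line is a perfect square, then splits the variables into the two classes $S_1,S_2$ according to whether the perturbation fixes the eigenvalue ratio or the eigenbasis, and uses the factorizations of $\tr^2p$ and $p_{12}^2$ to conclude that $R^2=\tfrac14\Delta$ is globally the square of a polynomial. Only at that point does the zero-divisor argument (via conjugation-invariance and Donkin's theorem) deliver the contradiction. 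That line-by-line analysis is the content your proposal is missing; to complete your route you would have to supply an independent proof that the discriminant of a multilinear non-central, non-PI polynomial is irreducible, which is not easier than the lemma itself.
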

\begin{rem*}
Since the details are rather technical, we start by sketching the
proof. We assume that $\Image p=M_2(K)\setminus\tilde K.$ The
linear change of the variable in position $i$ gives us the line
$A+tB$ in the image, where $A=p(x_1,\dots,x_m)$ and
$B=p(x_1,\dots,x_{i-1},y_i,x_{i+1},\dots,x_m)$. Take the function
that maps $t$ to $f(t) = (\lambda_1-\lambda_2)^2$, where
$\lambda_i$ are the eigenvalues of $A+tB$. Evidently $$f(t) =
(\lambda_1-\lambda_2)^2=(\lambda_1+\lambda_2)^2-4\lambda_1\lambda_2=(\tr(A+tB))^2-4\det
(A+tB),$$ so our function $f$ is a polynomial of $\deg\leq 2$
evaluated on entries of $A+tB$, and thus is a polynomial in $t$.

There are three possibilities: Either $\deg_t f \leq 1,$ or $f$ is
the square of another polynomial, or $f$ vanishes at two different
values of $t$ (say, $t_1$ and $t_2$). (Note that here we use that
the field is quadratically closed). This polynomial  $f$ vanishes
if and only if the two eigenvalues of $A+tB$ are equal, and this
happens in two cases (according to Lemma \ref{thm1}): $A+tB$ is
scalar or $A+tB$ is nilpotent. Thus either both $A+t_iB$ are
scalar, or $A+t_1B$ is scalar and $A+t_2B$ is nilpotent, or both
$A+t_iB$ are nilpotent. The first case implies that $A$ and $B$
are scalars, which is impossible. The second case implies that the
matrix $A+\frac{t_1+t_2}{2} B\in\tilde K$, which is also
impossible. The third case implies that $\tr A=\tr B=0$ which we
claim is also impossible. If $\deg _t f\leq 1,$ then for large $t$
the difference $\lambda_1-\lambda_2$ of the eigenvalues of $A+tB,$
is much less than $ t,$ so the difference between eigenvalues of
$B$ must be~$0$, a contradiction.

It follows that $f(t) = (\lambda_1-\lambda_2)^2$ is the square of
a polynomial (with respect to~$t$). Thus
$\lambda_1-\lambda_2=a+tb,$ where $a$ and $b$ are some functions
of the entries of the matrices $x_1,\dots,x_m,y_i$. Note that $a$
is the difference of eigenvalues of $A$ and $b$ is the difference
of eigenvalues of $B$,  Thus
\begin{equation}\label{symmetry}
a(x_1,\dots,x_m,y_i)=b(x_1,\dots,x_{i-1},y_i,x_{i+1},\dots,x_m,x_i).
\end{equation}
Note that $(\lambda_1-\lambda_2)^2=a^2+2abt+b^2t^2$ which means
that $a^2$, $b^2$ and $ab$ are polynomials (note that here we use
$\Char K\neq 2$). Thus, $\frac{a}{b}=\frac{a^2}{ab}$ is a rational
function. Therefore there are polynomials $p_1,p_2$ and $q$ such
that $a=p_1\sqrt q$ and $b=p_2\sqrt q$. Without loss of
generality, $q$ does not have square divisors. By \eqref{symmetry}
we have that $q$ does not depend on $x_i$ and $y_i$. Now consider
the change of other variables. The function $a$ is the difference
of eigenvalues of $A=p(x_1,\dots,x_m)$ so it remains unchanged.
Thus $q$ does not depend on other variables also. That is why
$\lambda_1\pm\lambda_2$ are two polynomials and hence $\lambda_i$
are polynomials. One concludes with the last paragraph of the
proof.

\end{rem*}
\begin{proof}
According to Lemma \ref{thm1} it suffices to prove that the image
of $p$ cannot be $M_2(K)\setminus\tilde K$. Assume that the image
of $p$ is $M_2(K)\setminus\tilde K$. Consider for each
variable~$x_i$ the line $x_i+ty_i,\ t\in K$. Then
$p(x_1,\dots,x_{i-1},x_i+ty_i,x_{i+1},\dots,x_m)$ is the line
$A+tB,$ where $p(x_1,\dots,x_m)=A$ and
$p(x_1,\dots,x_{i-1},y_i,x_{i+1},\dots,x_m)=B.$ Thus
$A+tB\notin\tilde K$ for any $t$. Since $B$ is diagonalizable, we
can choose our matrix units $e_{i,j}$ such that $B$ is diagonal.
Therefore
$$B=\lambda_BI+\left(\begin{array}{cc}c & 0
\\0 & -c\end{array}\right),\ A=\lambda_AI+\left(\begin{array}{cc}x
& y \\z & -x\end{array}\right).$$ Hence
$$A+tB=(\lambda_A+t\lambda_B)I+\left(\begin{array}{cc}x+tc & y \\z
& -x-tc\end{array}\right).$$

The matrix $\left(\begin{array}{cc}x+tc & y
\\z & -x-tc\end{array}\right)$
is nilpotent if and only if $(x+tc)^2 + yz = 0,$, which has the
solution
%
%
$t_{1,2}=\frac{1}{c}(-x\pm\sqrt{-yz}).$ Thus, when $yz\neq 0$,
$\pi(A+t_jB)$ will be nilpotent for $j=1,2$, where
$\pi(X)=X-\frac{1}{2}\tr X$. However $\tr(A+t_jB)$ is nonzero for
one of these values of $t_j$, implying $A+t_jB\in\tilde K$, a
contradiction.

Thus, we must have $yz=0$. Without loss of generality we can
assume that $z=0$. Any matrix $M$ of the type
$qI+\left(\begin{array}{cc}w & h
\\0 & -w\end{array}\right)$ satisfies $\det M=q^2-w^2$ and
$q=\frac{1}{2}\tr M$. Thus $x=\sqrt{\frac{1}{4}(\tr A)^2-\det A}$
and $c=\sqrt{\frac{1}{4}(\tr B)^2-\det B}.$ Consider the matrix
$$P_i=cA-xB=p(x_1,\dots,x_{i-1},cx_i-xy_i,x_{i+1},\dots,x_m),$$
which must be scalar or nilpotent. It can be written explicitly
algebraically in terms of the entries of $x_i$ and $y_i$. Also,
$P_i=(c\lambda_B-x\lambda_A)I+(cy)e_{12}$, where $e_{12}$ is the
matrix unit. There are two cases. If $y=0$ then the line $A+tB$
includes a scalar matrix, and if $y\neq 0$ then
$(c\lambda_B-x\lambda_A)=0$ and  all matrices on the line $A+tB$
have the same ratio of eigenvalues.

Let  $S_1 = \{ i : P_i\in K\}$ and  $S_2 = \{i : P_i\in
sl_2(K)\}.$ Without loss of generality we can assume for some
$k\leq m$ that   $S_1
  =\{1,2,\dots,k\}$  and $\{k+1,\dots,m\}.$ The
four entries of $p(x_1,\dots,x_m)$ are
$$p_{ij}(x_{1,(1,1)},x_{1,(1,2)},x_{1,(2,1)},x_{1,(2,2)},\dots,x_{m,(2,2)}),$$
polynomials in the entries of $x_i$. Consider the scalar function
$$f_1(x_1,\dots,x_m)=\frac{\frac{1}{2}\tr\
p(x_1,\dots,x_m)}{R(x_1,\dots,x_m)},$$ where
$R(x_1,\dots,x_m)=\sqrt{\frac{1}{4}\tr^2p(x_1,\dots,x_m)-\det
p(x_1,\dots,x_m)}.$ This function is defined everywhere except for
those $(x_1,\dots,x_m)$ for which $p(x_1,\dots,x_m)$ is a matrix
with equal eigenvalues, because $R$ is the half-difference of
eigenvalues. The function $f_1(x_1,\dots,x_m)$ does not depend on
$x_{k+1},\dots,x_m$ because for any $i\geq k+1$, substituting
$y_i$ instead of $x_i$ does not change the ratio of eigenvalues of
$p(x_1,\dots,x_m).$ Consider the matrix function
$$f_2(x_1,\dots,x_m)=\frac{p(x_1,\dots,x_m)-\frac{1}{2}\tr\
p(x_1,\dots,x_m)}{R(x_1,\dots,x_m)}.$$ This function is also
defined everywhere except for those $(x_1,\dots,x_m)$ such that
the eigenvalues of $p(x_1,\dots,x_m)$ are equal. The function
$f_2(x_1,\dots,x_m)$ does not depend on $x_i,\ i\leq k$, because
for any $i\leq k$ substituting $y_i$ instead of $x_i$ does not
change the basis in which $p(x_1,\dots,x_m)$ is diagonal. $R^2$ is
a polynomial: $$R^2=\frac{1}{4}\tr^2p(x_1,\dots,x_m)-\det
p(x_1,\dots,x_m).$$
Write $R^2=r_1r_2r_3$ where $r_1$ is the product of all the
irreducible factors in which only $x_1,\dots,x_k$ occur, $r_2$ is
the product of all the irreducible factors  in which only
$x_{k+1},\dots,x_m$ occur, $r_3$ is the product of   the other
irreducible factors. We have that
$$\frac{\tr^2
p(x_1,\dots,x_m)}{r_1(x_1,\dots,x_m)r_2(x_1,\dots,x_m)r_3(x_1,\dots,x_m)}=f_1^2(x_1,\dots,x_m)$$
does not depend on $x_{k+1},\dots,x_m$. Therefore if $\tr^2
p=q_1q_2q_3$ (again in $q_1$ only $x_1,\dots,x_k$ occur, in $q_2$
only $x_{k+1},\dots,x_m$ occur and $q_3$ is all the rest) then
$\frac{r_1r_2r_3}{q_1q_2q_3}$ does not depend on
$x_{k+1},\dots,x_m$. Hence $r_2=q_2$ and $r_3=q_3$(up to scalar
factors). As soon as $q_1q_2q_3$ is a square of a polynomial all
$q_i$ are squares therefore $r_2$ and $r_3$ are squares. Now
consider the function $\frac{p_{12}^2}{R^2}.$ This is the square
of the  $(1,2)$-entry in the matrix function $f_2$, so it does not
depend on $x_1,\dots,x_k$. Writing $p_{12}^2=q_1q_2q_3$(where,
again,  only $x_1,\dots,x_k$ occur in $q_1$, only
$x_{k+1},\dots,x_m$ occur in $q_2$  and $q_3$ is comprised of all
the rest), then all the $q_i$ are squares and $q_1=r_1$, implying
$r_1$ is square. Thus the polynomial $r_1r_2r_3=R^2$ is the square
of a polynomial. Therefore $R$ is a polynomial. We conclude that
$\lambda_1-\lambda_2=2R$ is a polynomial (where we recall that
$\lambda_1$ and $\lambda_2$ are the eigenvalues of
$p(x_1,\dots,x_m)$). $\lambda_1+\lambda_2=\tr (p)$ is also a
polynomial and hence $\lambda_i$ are polynomials, which obviously
  are invariant under conjugation since
  any conjugation is the square of some other conjugation).
Hence, $\lambda_i$ are the polynomials of traces, by Donkin's
Theorem quoted above. Now consider the polynomials
$(p-\lambda_1I)$ and $(p-\lambda_2I),$ which are   elements of the
algebra of free matrices with traces, which we noted above is a
domain. Both are not zero but their product is zero, a
contradiction.
\end{proof}
Finally, Theorem \ref{main} follows from Lemmas \ref{char2} and
\ref{char_n_2}.

\end{document}